\newtheorem{theorem}{Theorem}[section]
\newtheorem{lemma}{Lemma}
\newtheorem{proposition}{Proposition}[section]
\numberwithin{equation}{section}
\begin{document}

\setlength{\baselineskip}{1.2\baselineskip}
\title[On the solvability for a $p$-$k$-Hessian inequalityArticle Title]{On the solvability for a $p$-$k$-Hessian inequality}

	\author{Zhenghuan Gao}
	\address{School of Mathematics and Statistics\\
		Xi'an Jiaotong University\\
		Xi'an 710049, CHINA}
	\email{gzhmath@xjtu.edu.cn }
	
	\author{Shujun Shi}
	\address{School of Mathematical Sciences\\
		Harbin Normal University\\
		Harbin 150025, Heilongjiang Province, CHINA}
	\email{shjshi@hrbnu.edu.cn}
	
	\author{Yuzhou Zhang}
	\address{School of Mathematical Sciences\\
		Harbin Normal University\\
		Harbin 150025, Heilongjiang Province, CHINA}
	\email{yuzhouzhang@stu.hrbnu.edu.cn}

	\date{}

	\maketitle
	
	\begin{abstract}	In this paper, we discuss the solvability of a $p$-$k$-Hessian entire inequality. We prove  that the inequality with sub-lower-critical exponent admits no negative solutions. Moreover, the exponent is sharp. The proof is based on choosing suitable test functions and integrating by parts.\\
	~ \\
\noindent{\em Mathematical Subject Classification (2020):}  35B08, 35J60.\\
~\\
	\noindent\textbf{Keyword: }Solvability, $p$-$k$-Hessian inequality, integration by parts.
\end{abstract}


	\baselineskip 17pt
	
	\section{Introduction}
	In this paper, we consider the following differential inequality
	
	\begin{equation}\label{mainineq}
		F_{k,p}[u]\geq (-u)^\alpha\quad\text{in }\mathbb R^n,
	\end{equation}
	where $F_{k,p}[u]=\big[D(|Du|^{p-2}Du)\big]_k$ is sum of $k$-th principal minors of
	the matrix $D(|Du|^{p-2}Du)$.
	Let $\lambda$ be the eigenvalues of $D(|Du|^{p-2}Du)$. Then $F_{k,p}[u]=\sigma_k(\lambda)$,
	where $\sigma_k(\lambda)$ is the $k$-th elementary symmetric polynomial of $\lambda\in\mathbb R^n$,$$\sigma_k(\lambda)=\sum_{1\leq i_1<\cdots<i_k\leq n}\lambda_{i_1}\cdots\lambda_{i_k}.$$
	We call $F_{k,p}[u]$ the $p$-$k$-Hessian operator.
	It is a generalization of Laplacian operator, $p$-Laplacian operator, Monge-Amp\`ere operator and $k$-Hessian operator. And it is firstly introduced by Trudinger and Wang \cite{TrudingerWang1999Ann}.
	
	When $p=2$, $k=1$, \eqref{mainineq} is the Laplacian inequality $\Delta u\geq (-u)^\alpha$. In the celebrated paper \cite{GidasSpruck1981CPAM}, Gidas and Spruck  stated beautiful and deep results about the inequality. When $k=1$, \eqref{mainineq} coincides with the $p$-Laplacian inequality $\Delta_pu\geq (-u)^\alpha$. There are some splendid results about the $p$-Laplacian inequality by Serrin and Zou \cite{SerrinZou2002Acta}, and we first state one of them.
	
	\begin{theorem}[Serrin-Zou \cite{SerrinZou2002Acta}]
		Assume $p<n$, then the differential inequality $$\Delta_pu\geq (-u)^\alpha\quad\text{in }\mathbb R^n$$ has a negative solution in $\mathbb R^n$ if and only if $\alpha>\frac{n(p-1)}{n-p}$.
	\end{theorem}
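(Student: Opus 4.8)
The statement is an equivalence, so I would prove the two implications separately, after the normalization $v=-u$. Since $\Delta_p(-v)=-\Delta_p v$, a \emph{negative} $u$ solves $\Delta_p u\ge(-u)^\alpha$ in $\mathbb R^n$ exactly when $v>0$ solves $-\Delta_p v\ge v^\alpha$ in $\mathbb R^n$, i.e.\ $v$ is a positive strict $p$-supersolution. The critical exponent is forced by the $p$-Laplacian itself: for $\sigma>0$,
\[
-\Delta_p\big(|x|^{-\sigma}\big)=\sigma^{p-1}\big(n-1-(\sigma+1)(p-1)\big)\,|x|^{-\sigma(p-1)-p},
\]
which is positive exactly for $0<\sigma<\frac{n-p}{p-1}$, and dominates $v^\alpha=|x|^{-\sigma\alpha}$ at infinity exactly when $\sigma(\alpha-p+1)\ge p$. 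These two ranges of $\sigma$ overlap if and only if $\frac{p}{\alpha-p+1}<\frac{n-p}{p-1}$, i.e.\ $\alpha>\frac{n(p-1)}{n-p}$; this single computation is the backbone of both directions.

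For sufficiency ($\alpha>\frac{n(p-1)}{n-p}$) I would exhibit an explicit radial $C^1$ supersolution obtained by gluing two pieces at a radius $\rho$ (say $\rho=1$). Near the origin take $v(x)=A-c\,|x|^{p/(p-1)}$ on $\{|x|\le\rho\}$; a one-line computation gives $-\Delta_p v\equiv n\big(\tfrac{p}{p-1}\big)^{p-1}c^{p-1}$, a positive constant, so the required inequality there reduces to $n\big(\tfrac{p}{p-1}\big)^{p-1}c^{p-1}\ge A^\alpha$, which holds for $c$ small. On $\{|x|\ge\rho\}$ take $v(x)=K\,|x|^{-\sigma}$ with $\sigma$ chosen in the nonempty interval $\big(\tfrac{p}{\alpha-p+1},\tfrac{n-p}{p-1}\big)$; by the displayed identity, $-\Delta_p v\ge v^\alpha$ for all $|x|\ge\rho$ once $K$ (hence $c$) is small. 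The $C^1$-matching conditions at $\{|x|=\rho\}$ determine $A$ and $K$ in terms of $c$, and, since $\alpha>p-1$, all the resulting smallness requirements on $c$ can be met simultaneously. As $v\in C^1(\mathbb R^n)$ is piecewise smooth with continuous gradient, $-\Delta_p v$ has no singular part across $\{|x|=\rho\}$ and $v$ is a global weak supersolution, so $u=-v<0$ is the desired solution. (When $p=2$ the single profile $v=\varepsilon(1+|x|^2)^{-\sigma/2}$ already works.)

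For necessity --- no negative solution when $\alpha\le\frac{n(p-1)}{n-p}$ --- I would use the test-function (nonlinear-capacity) method, working with a weak solution $v>0$ of $-\Delta_p v\ge v^\alpha$; since $v$ is continuous and positive it is locally bounded below by a positive constant, so for small $s>0$, large $\lambda$, and the standard cutoff $\varphi_R$ ($\varphi_R\equiv1$ on $B_R$, $\mathrm{supp}\,\varphi_R\subset B_{2R}$, $|D\varphi_R|\le C/R$) the function $\varphi_R^{\lambda}v^{-s}\ge0$ is an admissible test function. Testing the inequality against it and integrating by parts yields the \emph{favorable} term $-s\int|Dv|^{p}v^{-s-1}\varphi_R^{\lambda}\le0$ plus a remainder controlled by $\int|Dv|^{p-1}|D\varphi_R|\,v^{-s}\varphi_R^{\lambda-1}$; absorbing the remainder into the favorable term by Young's inequality and then applying H\"older's inequality (legitimate because $\alpha>p-1$; the complementary range $\alpha\le p-1$, still inside the nonexistence regime, is handled by the same scheme with the H\"older step rearranged) gives the a priori estimate
\[
\int_{B_R}v^{\alpha-s}\,dx\ \le\ C\,R^{\,n-\frac{p(\alpha-s)}{\alpha-p+1}}.
\]
In the strictly subcritical range $\alpha<\frac{n(p-1)}{n-p}$ the exponent of $R$ is negative for all small $s>0$, so letting $R\to\infty$ forces $\int_{\mathbb R^n}v^{\alpha-s}=0$, contradicting $v>0$. (Equivalently one may reduce, by a short comparison argument, to the radial ODE inequality $-(r^{n-1}|m'|^{p-2}m')'\ge r^{n-1}m^{\alpha}$ for the spherical mean $m$ of $v$; combining the lower bound $m(r)\ge c\,\phi(r)^{1/(p-1)}r^{-(n-p)/(p-1)}$, where $\phi(r)=\int_0^r t^{n-1}m^{\alpha}\,dt$, with $\phi'=r^{n-1}m^{\alpha}$ produces $\phi'\ge c\,r^{\,n-1-\frac{(n-p)\alpha}{p-1}}\phi^{\alpha/(p-1)}$, and when $\alpha\le\frac{n(p-1)}{n-p}$ the exponent of $r$ is $\ge-1$, so integrating drives $\phi^{1-\alpha/(p-1)}$ below $0$, which is impossible.)

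The main obstacle is the borderline case $\alpha=\frac{n(p-1)}{n-p}$. There the exponent of $R$ in the estimate above collapses to $0$ at $s=0$ and is strictly positive for every $s>0$, so the estimate by itself gives no contradiction. One must either sharpen the test-function argument --- replacing $\varphi_R$ by a logarithmic cutoff on an annulus and/or optimizing over $s\downarrow0$, where a saved power of $\log R$ is decisive --- or invoke the radial comparison above, whose threshold $n-1-\frac{(n-p)\alpha}{p-1}\ge-1$ already covers the critical exponent. A secondary technical point throughout is the degeneracy of $\Delta_p$: the integrations by parts should be carried out rigorously at the level of weak solutions, e.g.\ by first replacing $v^{-s}$ with $(v+\varepsilon)^{-s}$ and then letting $\varepsilon\downarrow0$.
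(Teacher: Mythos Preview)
The paper does not prove this statement directly --- it is quoted as a known result of Serrin--Zou --- but the paper's own Theorem~\ref{mainth} contains it as the special case $k=1$, so your proposal should be compared with that proof specialized to $k=1$.

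Your overall architecture (explicit radial supersolution for sufficiency; test-function/integration-by-parts estimate for necessity) is the same as the paper's. For sufficiency the paper uses the single smooth profile $w=-C_*\big(A+|x|^{p/(p-1)}\big)^{-(p-1)/(\alpha-(p-1))}$ (the $k=1$ instance of its formula), which already interpolates between the ``polynomial'' behavior near $0$ and the power decay at infinity and hence avoids your $C^1$-gluing at $|x|=\rho$; your construction is correct but less economical. For the strictly subcritical necessity your estimate is essentially the paper's inequality obtained from \eqref{eq::mainineq-1}--\eqref{eq::mainineq-2} at $k=1$.

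The substantive divergence is at the critical exponent $\alpha=\tfrac{n(p-1)}{n-p}$. The paper uses neither a logarithmic cutoff nor a radial ODE. Instead (again at $k=1$) it first sets $\delta=0$ to obtain the identity $k\int\sigma_k\eta^\theta=-\theta E_1$, then fixes a small $\delta>0$ and, after several applications of Young's and H\"older's inequalities with carefully matched exponents, bounds $|E_1|$ by sums of powers $\big(\int_{B_{2R}\setminus B_R}(-u)^\alpha\eta^\theta\big)^\gamma$ with $0<\gamma<1$ and \emph{no} residual power of $R$. This first yields $\int_{\mathbb R^n}(-u)^\alpha<\infty$, hence the annulus integrals tend to $0$; feeding that back into the same inequality forces $\int_{\mathbb R^n}(-u)^\alpha=0$, a contradiction. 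Your two suggested alternatives are not on equal footing: the logarithmic-cutoff idea can be made to work but you have not carried it out, while your radial-ODE reduction hides a genuine difficulty --- for the nonlinear $p$-Laplacian the spherical \emph{mean} of a supersolution need not be a radial supersolution, so the passage to $-(r^{n-1}|m'|^{p-2}m')'\ge r^{n-1}m^\alpha$ is not the ``short comparison argument'' you claim (one typically needs a weak Harnack inequality and works with the spherical infimum rather than the mean). The paper's two-pass argument is both elementary and complete, and would be the cleanest way to close your critical case.
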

	
	When $p=2$, $2$-$k$-Hessian operator is the usual $k$-Hessian operator. Let $\Omega$ be a domain in $\mathbb R^n$.
	A function $u$  is called $k$-admissible in $\Omega$ if $u\in C^2(\Omega)$ and $\lambda(D^2u)\in\Gamma_k$ in $\Omega$, where
	$$\Gamma_k:=\{\lambda\in\mathbb R^n\mid \sigma_l(\lambda)\geq 0,l=1,\cdots,k\}.$$
	Let $\Phi^k(\Omega)$ be the class of $k$-admissible functions
	$$\Phi^k(\Omega):=\Big\{u\in C^2(\Omega)\Big|\lambda\big(D^2u(x)\big)\in \Gamma_k,\forall\ x\in\Omega\Big\}.$$
	When $2k<n$, denote $k^*:=\frac{n(k+1)}{n-2k}$, $k*=\frac{(n+2)k}{n-2k}$ and $k_*:=\frac{nk}{n-2k}$. $k^*$ is the critical exponent for Sobolev embedding inequality established by Wang \cite{Wangxujia1994Indiana}. $k*$ is the critical exponent introduced by Tso \cite{Tso1990AIHP} where the author considered the solvability of the Dirichlet problem $\sigma_k(D^2u)=(-u)^p$ in $\Omega$ and $u|_{\partial\Omega}=0$. $k_*$ is called lower critical exponent.
	For $k$-Hessian inequality $\sigma_k(D^2u)\geq (-u)^\alpha$, Phuc and Verbitsky \cite{PhucVerbitsky2006CPDE,PhucVerbitsky2008Ann} proved that it has no negative solution for $\alpha\in (k,k_*]$ and showed that  $k_*$ is sharp. Ou \cite{Ouqianzhong2010MAA} obtained the nonexistence result with sub-lower critical exponent via different approach.
	\begin{theorem}[Phuc-Verbitsky \cite{PhucVerbitsky2006CPDE,PhucVerbitsky2008Ann}, Ou \cite{Ouqianzhong2010MAA}]
		Assume $2k<n$, then the inequality $$\sigma_k(D^2u)\geq (-u)^\alpha\quad\text{in }\mathbb R^n$$ admits no negative entire solution in $\Phi^k(\mathbb R^n)$ for any $\alpha\in(-\infty,k_*]$. Moreover, $k_*$ is sharp.
	\end{theorem}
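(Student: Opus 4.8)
I would argue by contradiction, with the whole scheme built around the divergence form of the $k$-Hessian and a rescaled family of test functions (in the spirit of Tso, Trudinger--Wang, Phuc--Verbitsky and Ou). Suppose $u\in\Phi^k(\mathbb R^n)$ is a negative entire solution of $\sigma_k(D^2u)\ge(-u)^\alpha$. Since $\Gamma_k\subset\Gamma_1$ we have $\Delta u=\operatorname{tr}(D^2u)=\sigma_1\big(\lambda(D^2u)\big)\ge0$, so $u$ is subharmonic; being $\le0$ and $\not\equiv0$, it is strictly negative everywhere, and $v:=-u$ is a positive superharmonic function on $\mathbb R^n$. Let $F^{ij}:=\partial\sigma_k(D^2u)/\partial u_{ij}$ be the entries of the $(k-1)$-st Newton tensor of $D^2u$. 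Then $F^{ij}u_{ij}=k\,\sigma_k(D^2u)$ (Euler's identity), $\partial_iF^{ij}=0$ (the Newton tensor of a Hessian is divergence free), $\operatorname{tr}(F^{ij})=(n-k+1)\,\sigma_{k-1}(D^2u)$, and---because $\lambda(D^2u)\in\Gamma_k\subset\Gamma_{k-1}$---the matrix $(F^{ij})$ is positive semidefinite. Consequently, for every nonnegative $\psi\in C_c^1(\mathbb R^n)$,
\[
\int_{\mathbb R^n}\sigma_k(D^2u)\,\psi\,dx=\tfrac1k\int_{\mathbb R^n}F^{ij}u_{ij}\,\psi\,dx=-\tfrac1k\int_{\mathbb R^n}F^{ij}u_i\,\partial_j\psi\,dx ,
\]
with no boundary contribution; this identity and its analogues for $\sigma_{k-1},\dots,\sigma_1$ are the engine of the proof.

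The key step is an a priori integral estimate obtained from a suitable $\psi$. Fix a small $\tau>0$, a large integer $m$, and a standard cutoff $\eta_R(x)=\zeta(|x|/R)$ with $\zeta\equiv1$ on $[0,1]$, $\zeta\equiv0$ outside $[0,2]$, $0\le\zeta\le1$. Testing the inequality against $\psi=(-u)^{-\tau}\eta_R^{m}$ and using the displayed identity, the contribution of $\partial_j(-u)^{-\tau}$ equals $-\tfrac{\tau}{k}\int F^{ij}u_iu_j(-u)^{-\tau-1}\eta_R^{m}\le0$ by semidefiniteness of $(F^{ij})$, hence may be dropped (or kept on the left for absorption); the surviving term carries $\partial_j\eta_R$ and is controlled by Cauchy--Schwarz in the $F$-quadratic form, $\big(F^{ij}u_i\,\partial_j\eta_R\big)^2\le\big(F^{ij}u_iu_j\big)\big(F^{ij}\partial_i\eta_R\,\partial_j\eta_R\big)$, by Young's inequality, and by $F^{ij}\partial_i\eta_R\,\partial_j\eta_R\le(n-k+1)\,\sigma_{k-1}(D^2u)\,|D\eta_R|^2$. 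Iterating this integration-by-parts scheme down through $\sigma_{k-1},\dots,\sigma_0\equiv1$, with the routine bookkeeping of powers of $(-u)$, $|D\eta_R|$ and $\eta_R$ (for $k=1$ a single integration by parts suffices and the computation is elementary), one arrives at
\[
\int_{B_R}(-u)^{\alpha-\tau}\,dx\ \le\ C(n,k,\tau,m)\,R^{-2k}\int_{B_{2R}\setminus B_R}(-u)^{k-\tau}\,dx .
\]

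To conclude, suppose first $\alpha\in(k,k_*)$, take $\tau$ small, and apply H\"older's inequality on the annulus: with $I(R):=\int_{B_R}(-u)^{\alpha-\tau}\,dx$ and $\theta:=\tfrac{k-\tau}{\alpha-\tau}\in(0,1)$ one gets $I(R)\le C\,R^{\,n-2k-n\theta}\,I(2R)^{\theta}$. Because $\alpha<k_*=\tfrac{nk}{n-2k}$, one may take $\tau$ so small that $\theta>\tfrac{n-2k}{n}$, i.e.\ the exponent $n-2k-n\theta$ is strictly negative; together with the requisite polynomial growth bound on $I(R)$ (which the scheme provides, using the superharmonicity of $-u$), the self-improving iteration forces $I(R)\to0$, contradicting $u<0$. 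The remaining range $\alpha\le k$---in particular $\alpha\le0$---is treated separately: Maclaurin's inequality $\sigma_k(D^2u)\le c_{n,k}(\Delta u)^k$ yields $\Delta u\ge c\,(-u)^{\alpha/k}$, so $v=-u$ would be a positive superharmonic entire solution of $-\Delta v\ge c\,v^{\alpha/k}$ with $\alpha/k\le1<\tfrac{n}{n-2}$, which is impossible by classical Liouville-type arguments (for $\alpha/k\le0$ this is immediate: $\{x\in B_R:v(x)\le 2v(0)\}$ has measure $\gtrsim R^n$ by the mean-value inequality $\fint_{B_R}v\le v(0)$, whence $\int_{B_R}(-\Delta v)\gtrsim R^n$ and the spherical mean of $v$ would eventually become negative). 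Finally, sharpness of $k_*$ follows from the explicit construction: for each $\alpha>k_*$, solving the radial ODE associated with $\sigma_k(D^2u)=(-u)^\alpha$ produces a $k$-admissible negative entire solution with the scaling-dictated decay $|x|^{-2k/(\alpha-k)}$, so no exponent larger than $k_*$ can appear.

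The hard part is the endpoint $\alpha=k_*$, which the iteration above misses: there $\theta$ can only approach $\tfrac{n-2k}{n}$, so $n-2k-n\theta\ge0$ and the rescaling yields merely $\sup_RI(R)<\infty$, i.e.\ $(-u)\in L^{k_*-\tau}(\mathbb R^n)$. To recover this case I would exploit the gained integrability of $(-u)$ together with a more delicate test function---replacing the outermost cutoff by a logarithmically damped one such as $\eta_R^{m}(\log R)^{-\ell}$---in the spirit of the critical-case analyses of Phuc--Verbitsky and of Ou; the alternative, more potential-theoretic route through Wolff potentials and the Hessian--Sobolev inequality of Wang also reaches $\alpha=k_*$. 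Expect this borderline analysis, rather than the subcritical range, to be where the real work lies; the only other technical points---absence of boundary terms in the integrations by parts and the growth control on $I(R)$---are handled by working throughout with compactly supported cutoffs and the superharmonicity of $-u$.
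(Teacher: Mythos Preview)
Your subcritical scheme is essentially the paper's own (specialized to $p=2$): test against $(-u)^{-\tau}\eta_R^{m}$, integrate by parts using the divergence-free Newton tensor, iterate down through $\sigma_{k-1},\dots,\sigma_0$, and close with Young/H\"older. The paper packages the iteration as an explicit expansion identity (the Lemma in Section~3) rather than arguing via Cauchy--Schwarz in the $F$-quadratic form, but the resulting key inequality is the same. Your treatment of the range $\alpha\le k$ by Maclaurin and reduction to the Laplacian Liouville theorem is a genuine shortcut not in the paper; the paper instead handles all $\alpha\le k_*$ uniformly through the same iteration, splitting into the cases $\alpha=k$, $\alpha<k$, $k<\alpha<k_*$ by adjusting the exponent $\delta$ (your $\tau$). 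The sharpness construction is also the same explicit radial profile.

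The genuine gap is the endpoint $\alpha=k_*$, which you correctly flag as the hard case but do not prove. Your suggested routes---logarithmically damped cutoffs, or the Wolff-potential machinery---are not what the paper does. Its argument runs as follows. First take $\tau=0$, so that the integration by parts collapses to the exact identity
\[
k\int_{\mathbb R^n}\sigma_k(D^2u)\,\eta^\theta\;=\;-\theta\,E_1,\qquad E_1:=\int_{\mathbb R^n}F^{ij}u_i\,\eta_j\,\eta^{\theta-1},
\]
and note that $E_1$ is supported on the annulus $B_{2R}\setminus B_R$ because $D\eta$ is. Then, re-running the iteration with a small positive $\tau$ and introducing two auxiliary families of weighted integrals $V_s$, $W_s$, the paper bounds $|E_1|$ by a sum of three terms, each of which (after H\"older, with the $R$-powers cancelling exactly thanks to $\alpha=k_*$ and the choice $\beta=n\tau/\alpha$) is a power \emph{strictly less than $1$} of $\int_{B_{2R}\setminus B_R}(-u)^{\alpha}\eta^{\theta}$. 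An inequality of the form $X\le C(X^{a_1}+X^{a_2}+X^{a_3})$ with all $a_i<1$ forces $X$ bounded, so $\int_{\mathbb R^n}(-u)^{\alpha}<\infty$; hence the annular integrals tend to $0$ as $R\to\infty$, and feeding this back into the same inequality yields $\int_{\mathbb R^n}(-u)^{\alpha}\le0$, a contradiction. No logarithmic weights or potential theory appear; the mechanism is the annular localization of $E_1$ combined with the sub-unit exponents. This self-improving step is what your proposal is missing.
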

	
	The paper is concerned with the $p$-$k$-Hessian inequality \eqref{mainineq}.
	A function $u$ is called $p$-$k$-admissible in $\Omega$ if $u\in C^1(\Omega)$, $|Du|^{p-2}Du\in C^1(\Omega)$ and $\lambda \big(D(|Du|^{p-2}Du)\big)\in\Gamma_k$ in $\Omega$.
	Let $\Phi^{p,k}(\Omega)$ be the class of $p$-$k$-admissible functions. That is
	$$\Phi^{p,k}(\Omega):=\Big\{u\in C^1(\Omega)\Big| |Du|^{p-2}Du\in C^1(\Omega), \lambda\Big(D\big(|Du|^{p-2}Du(x)\big)\Big)\in\Gamma_k,\forall\ x\in\Omega\Big\}.$$
	If $pk<n$, denote by $k_{p,*}:=\frac{n(p-1)k}{n-pk}$.

	Now we state our main theorem as follows:
	\begin{theorem}\label{mainth}
		If $pk<n$, then \eqref{mainineq} has no negative solution in $\Phi^{p,k}(\mathbb R^n)$ if and only if $\alpha\leq k_{p,*}$.
	\end{theorem}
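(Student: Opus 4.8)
The plan is to prove the two directions separately; the substance is in showing nonexistence for $\alpha\le k_{p,*}$, while sharpness is handled by an explicit radial barrier.

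\smallskip
\noindent\emph{Structure of $F_{k,p}$.} Writing $W=D\big(|Du|^{p-2}Du\big)$ we have $W_{ij}=\partial_j\big(|Du|^{p-2}u_i\big)$, whence $\partial_\ell W_{ij}=\partial_j W_{i\ell}$; consequently the Newton tensor $\mathcal S=\mathcal S(W)$ (defined by the generalized Kronecker symbol, exactly as in the symmetric case) is divergence free, $\partial_j\mathcal S^{ij}=0$, and
\[
F_{k,p}[u]=\sigma_k\big(\lambda(W)\big)=\tfrac1k\,\mathcal S^{ij}W_{ij}=\tfrac1k\,\partial_j\big(\mathcal S^{ij}|Du|^{p-2}u_i\big).
\]
Since $W=|Du|^{p-2}\big(I+(p-2)\tfrac{Du\otimes Du}{|Du|^2}\big)D^2u$ is pointwise similar to a symmetric matrix with eigenvalues $\lambda(W)$, and $u\in\Phi^{p,k}\subset\Phi^{p,k-1}$, after symmetrizing $\mathcal S$ is positive semidefinite and $\operatorname{tr}\mathcal S=(n-k+1)\,\sigma_{k-1}(\lambda(W))$. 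These facts put us in the setting of Ou and of Phuc--Verbitsky for the $k$-Hessian inequality.

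\smallskip
\noindent\emph{Nonexistence for $\alpha\le k_{p,*}$.} Let $u\in\Phi^{p,k}(\mathbb R^n)$ be negative with $F_{k,p}[u]\ge(-u)^\alpha$. Fix $\eta\in C^\infty_c(B_{2R})$ with $\eta\equiv1$ on $B_R$, $0\le\eta\le1$, $|D\eta|\le CR^{-1}$, and test \eqref{mainineq} against $\varphi=(-u)^{-\gamma}\eta^{s}$ with $\gamma>0$ and $s$ large. Integrating by parts via the identity above,
\[
\int(-u)^{\alpha-\gamma}\eta^{s}+\tfrac{\gamma}{k}\int\mathcal S^{ij}u_iu_j\,|Du|^{p-2}(-u)^{-\gamma-1}\eta^{s}\le-\tfrac{s}{k}\int\mathcal S^{ij}u_i\eta_j\,|Du|^{p-2}(-u)^{-\gamma}\eta^{s-1}.
\]
Cauchy--Schwarz for $\mathcal S$ and Young's inequality absorb the second left-hand term into the first, and using $\mathcal S^{ij}\eta_i\eta_j\le\operatorname{tr}\mathcal S\,|D\eta|^2=(n-k+1)\sigma_{k-1}(\lambda(W))|D\eta|^2$ one gets
\[
\int(-u)^{\alpha-\gamma}\eta^{s}\le C\int\sigma_{k-1}(\lambda(W))\,|Du|^{p-2}|D\eta|^{2}(-u)^{1-\gamma}\eta^{s-2}.
\]
Now one iterates: $\sigma_{k-1}(\lambda(W))$ carries the same divergence structure one order down, so a further integration by parts lowers the order again (the Newton-tensor recursion $\mathcal S_{k-1}=\sigma_{k-1}I-W\mathcal S_{k-2}$ ensures that the second-order terms produced by differentiating $|Du|^{p-2}$ recombine rather than blow up). After $k$ such steps, combined with Hölder on the annulus $B_{2R}\setminus B_R$ and repeated use of \eqref{mainineq} to recycle the $(-u)^{\alpha}$-mass, one reaches, for an optimal choice of $\gamma,s$, an estimate whose capacity exponent is $\le0$ precisely when $\alpha\le k_{p,*}$; letting $R\to\infty$ then forces $\int_{\mathbb R^n}(-u)^{\alpha-\gamma}\,dx=0$, impossible since $-u>0$. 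That $k_{p,*}$ is the right threshold reflects the scaling $u\mapsto\sigma^{\theta}u(\sigma\,\cdot)$ leaving \eqref{mainineq} (with equality) invariant for $\theta=\tfrac{pk}{\alpha-(p-1)k}$: the $p$-$k$-harmonic fundamental solution decays like $|x|^{-m_0}$ with $m_0=\tfrac{n-pk}{k(p-1)}$ and $m_0k_{p,*}=n$, so for $\alpha\le k_{p,*}$ one has $\theta\ge m_0$, and a negative entire solution cannot decay at the harmonic rate or faster.

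\smallskip
\noindent\emph{Sharpness.} For $\alpha>k_{p,*}$ I exhibit a radial solution. Set $p'=\tfrac{p}{p-1}$ and $u(x)=-c\,(1+|x|^{p'})^{-\beta}$ with $\beta\in\big(0,\tfrac{n-pk}{pk}\big)$ and $c>0$ to be chosen. Since $(p'-1)(p-1)=1$, $|Du|^{p-2}Du$ is $C^1$ across the origin (equal there to $(c\beta p')^{p-1}x$ to leading order), and with $t=|x|^{p'}$ the eigenvalues of $W$ are $\nu=(c\beta p')^{p-1}(1+t)^{-(\beta+1)(p-1)}$ with multiplicity $n-1$ and $\mu=\nu\big(1-(\beta+1)p\,\tfrac{t}{1+t}\big)$; hence for $1\le\ell\le k$
\[
\sigma_\ell\big(\lambda(W)\big)=\binom{n-1}{\ell-1}\nu^{\ell}\Big(\tfrac n\ell-(\beta+1)p\,\tfrac{t}{1+t}\Big)\ge\binom{n-1}{\ell-1}\nu^{\ell}\Big(\tfrac nk-(\beta+1)p\Big)>0,
\]
using $(\beta+1)p<\tfrac nk$, so $u\in\Phi^{p,k}(\mathbb R^n)$. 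As $(-u)^{\alpha}=c^{\alpha}(1+t)^{-\alpha\beta}$, the inequality $F_{k,p}[u]\ge(-u)^{\alpha}$ is equivalent to
\[
\binom{n-1}{k-1}(\beta p')^{(p-1)k}\Big(\tfrac nk-(\beta+1)p\,\tfrac{t}{1+t}\Big)\ge c^{\,\alpha-(p-1)k}(1+t)^{(\beta+1)(p-1)k-\alpha\beta},\qquad t\ge0.
\]
The left side is bounded below by a positive constant, and the right-hand exponent is $\le0$ exactly when $\alpha\ge(p-1)k\big(1+\tfrac1\beta\big)$, which decreases to $k_{p,*}$ as $\beta\uparrow\tfrac{n-pk}{pk}$. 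Given $\alpha>k_{p,*}$, pick such a $\beta$, then $c$ small (using $\alpha-(p-1)k>0$); the inequality holds for every $t\ge0$, and $u$ is the required negative entire solution.

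\smallskip
\noindent\emph{Main obstacle.} The essential difficulty is the nonexistence direction, and within it the iteration: one must keep the quadratic-form estimates consistent after symmetrizing the non-symmetric $W$, control the degenerate set $\{Du=0\}$ (e.g.\ by working first on $\{|Du|>\varepsilon\}$), and -- above all -- organize the successive integrations by parts so that the $p$-weight $|Du|^{p-2}$ and the powers of $-u$ combine to yield the \emph{sharp} exponent $k_{p,*}$, not the weaker threshold $\tfrac{nk(p-1)}{n-p}$ (strictly smaller for $k\ge2$) that the cheap route -- bounding $F_{k,p}[u]\le C(\Delta_p u)^{k}$ by the Maclaurin inequality and invoking the $p$-Laplacian nonexistence theorem for $\Delta_p u\ge c(-u)^{\alpha/k}$ -- would give.
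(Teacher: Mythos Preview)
Your sharpness construction is correct and essentially identical to the paper's. The nonexistence outline follows the same philosophy as the paper (test against $(-u)^{-\gamma}\eta^{s}$, integrate by parts using the divergence-free Newton tensor, iterate down from $\sigma_k$ to $\sigma_0$), but two genuine gaps keep it from being a proof.

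First, the step ``Cauchy--Schwarz for $\mathcal S$'' is not justified: since $W$ is non-symmetric, so is $\mathcal S=\sigma_k^{ij}(W)$, and the inequality $|\mathcal S^{ij}u_i\eta_j|\le(\mathcal S^{ij}u_iu_j)^{1/2}(\mathcal S^{ij}\eta_i\eta_j)^{1/2}$ can fail for non-symmetric matrices even when the symmetric part is positive semidefinite. The paper does \emph{not} use Cauchy--Schwarz here; instead it proves an entrywise bound $|\sigma_s^{ij}|\le\sum_\ell\sigma_s^{\ell\ell}$ (its Proposition~2.4) by writing out the conjugation $\sigma_k^{ij}=\tilde\sigma_k^{m\ell}B^{-1/2}_{im}B^{1/2}_{j\ell}$ explicitly and exploiting the very special rank-one structure of $B^{\pm1/2}=|Du|^{\pm(p-2)/2}(I+c_\pm|Du|^{-2}Du\otimes Du)$. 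This is where the specific $p$-Laplacian form of $W$ enters, and your ``after symmetrizing $\mathcal S$ is positive semidefinite'' does not substitute for it.

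Second, and more seriously, the sentence ``After $k$ such steps \dots\ one reaches an estimate whose capacity exponent is $\le0$ precisely when $\alpha\le k_{p,*}$'' is the entire content of the nonexistence direction and is simply asserted. The paper carries this out via an explicit expansion identity $k\int\sigma_k(-u)^{-\delta}\eta^\theta=-\sum_{s=1}^k b_sB_s-\sum_{s=1}^k e_sE_s$ with carefully tracked coefficients $b_s,e_s$, and then must treat the ranges $\alpha<(p-1)k$, $\alpha=(p-1)k$, $(p-1)k<\alpha<k_{p,*}$, and $\alpha=k_{p,*}$ separately with different choices of $\delta$. In particular, at the endpoint $\alpha=k_{p,*}$ your capacity exponent is exactly zero, so letting $R\to\infty$ yields only a uniform bound, not a contradiction; the paper needs a second, substantially longer round of estimates (auxiliary quantities $V_s,W_s$, a choice $\delta=0$ to obtain $k\int\sigma_k\eta^\theta=-\theta E_1$, and an annulus argument showing $\int_{B_{2R}\setminus B_R}(-u)^\alpha\eta^\theta\to0$) to close the endpoint case. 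Your sketch does not distinguish this case at all, and the ``Main obstacle'' paragraph correctly identifies the difficulty without resolving it.
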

	We are going to prove Theorem \ref{mainth} by using local integral estimates and their asymptotic behavior. To conduce these sharp estimates, we need to establish some iteration forms on the $p$-$k$-Hessian inequality. The approach is widely used in partial differential equation.
	For history of this idea and strategy, one can refer to Obata \cite{Obata1962JMSJ}, Mitidieri and Pohozaev \cite{MitidieriPohozaev2004MJM}. The iteration technique for $k$-Hessian inequality firstly arose in \cite{ChangGurskyYang2003NSAM} by Chang, Gursky and Yang, see also Gonz\'alez \cite{Gonzalez2005Duke}, Ou \cite{Ouqianzhong2010MAA,Ouqianzhong2013PJM}.
	
	Recently, Bao and Feng \cite{BaoFeng2022CMB} gave the necessary and sufficient conditions on global solvability for another type $p$-$k$-Hessian inequalities
	\begin{equation}\label{eq::BaoFeng}
		F_{k,p}[u]\geq f(u)\quad\text{in }\mathbb R^n.
	\end{equation}
	They proved that if $f(t)$ is monotonically non-decreasing in $(0,\infty)$, then \eqref{eq::BaoFeng} admits a positive $p$-$k$-admissible solution if and only if for any $a>0$,
	\begin{equation}\label{cond::KO}
		\int_a^\infty\bigg(\int_0^tf^k(s)\mathrm ds\bigg)^{-\frac1{(p-1)k+1}}\mathrm dt=\infty.
	\end{equation}
	Condition \eqref{cond::KO} is a generalized Keller-Osserman condition. The original  Keller-Osserman condition was proposed by Keller \cite{Keller1957CPAM} and Osserman \cite{Osserman1957PJM} when dealing with $\Delta u\geq f(u)$ in $\mathbb R^n$. Naito and Usami \cite{NaitoUsami1997MathZ} extended it to the case of $p$-Laplacian. Ji and Bao \cite{JiBao2010PAMS} generalized it to the $k$-Hessian case. However, these are different issues from ours.
	
	Now we outline the content of the paper. In Section \ref{sec2}, we state some facts concerning the elementary symmetric functions on non-symmetric matrices and $p$-$k$-Hessian operator. In Section \ref{sec3}, we prove Theorem \ref{mainth}.

	\section{Preliminary}\label{sec2}
	For any $k=1,\cdots, n$, and $\lambda=(\lambda_1,\cdots, \lambda_n)$, the $k$-th elementary symmetric function on $\lambda$ is defined by
	\begin{equation}
		\sigma_k(\lambda)=\sum_{1\leq i_1<i_2<\cdots<i_k\leq n}\lambda_{i_1}\lambda_{i_2}\cdots\lambda_{i_k}.
	\end{equation}
	Let $A=(a_{ij})\in\mathbb R^n$ be an $n\times n$ matrix, we define $\sigma_k(A)$ by the sum of $k$-th principal minors of $A$. That is
	\begin{equation}
		\sigma_k(A)=\frac1{k!}\sum_{1\leq i_1,\cdots, i_{k}, j_1,\cdots, j_k\leq n}\delta_{i_1i_2\cdots i_k}^{j_1j_2\cdots j_k}a_{i_1j_1}a_{i_2j_2}\cdots a_{i_kj_k},
	\end{equation}
	where $\delta_{i_1i_2\cdots i_k}^{j_1j_2\cdots j_k}$ is the Kronecker symbol, which has the value $+1$ (respectively, $-1$) if $i_1,i_2,\cdots,i_k$
	are distinct and $(j_1 j_2\cdots j_k)$ is an even permutation (respectively, an odd permutation) of
	$(i_1i_2\cdots i_k)$, and has the value $0$ in any other cases. We use the convention that $\sigma_0(A)=1$.
	We denote the eigenvalues of $A$ by $\lambda(A)$ and if $\lambda(A)$ are all real, it is clear that $\sigma_k(\lambda(A))=\sigma_k(A)$, see \cite{PietraGavitoneXia2021Adv}.
	Denote by $\sigma_k^{ij}(A)=\frac{\partial\sigma_k(A)}{\partial a_{ij}}$.It is easy to see that
	\begin{equation}\label{formula::sigmak::1}
		\sigma_k(A)=\frac1k\sum_{i,j=1}^n\sigma_k^{ij}(A)a_{ij}.
	\end{equation}
	For general (non-symmeric) matrices, Pietra, Gavitone and Xia proved the following result in \cite{PietraGavitoneXia2021Adv}.
	\begin{proposition}[Pietra, Gavitone and Xia \cite{PietraGavitoneXia2021Adv}]\label{prop::expand}
		For any $n\times n$ matrix $A=(a_{ij})$, we have
		\begin{equation}\label{eq::expand}
			\sigma_k^{ij}(A)=\sigma_{k-1}(A)\delta_{ij}-\sum_{l=1}^n\sigma_{k-1}^{il}(A)a_{jl}.
		\end{equation}
	\end{proposition}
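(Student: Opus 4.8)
The plan is to deduce the identity from a single polynomial identity in an auxiliary parameter $t$, so that no special structure of $A$ is needed. The starting point is the elementary fact that $\det(I+tA)=\sum_{k=0}^{n}\sigma_k(A)\,t^k$, since the coefficient of $t^k$ in $\det(I+tA)$ is exactly the sum of the $k\times k$ principal minors of $A$, i.e.\ $\sigma_k(A)$. Differentiating this relation with respect to $a_{ij}$ and using Jacobi's formula $\partial(\det B)/\partial B_{ij}=C_{ij}(B)$, where $C_{ij}(B)$ denotes the $(i,j)$ cofactor of $B$, applied with $B=I+tA$ (so $B_{pq}=\delta_{pq}+t\,a_{pq}$), I obtain $t\,C_{ij}(I+tA)=\sum_{k\ge1}\sigma_k^{ij}(A)\,t^{k}$, hence
\[
C_{ij}(I+tA)=\sum_{k\ge1}\sigma_k^{ij}(A)\,t^{k-1}.
\]

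Next I would invoke the Laplace cofactor identity $\sum_{j=1}^{n}B_{ij}C_{lj}(B)=\det(B)\,\delta_{il}$, valid for every square matrix $B$ (it is the expansion along a row when $i=l$, and it vanishes because of two equal rows when $i\ne l$). Taking again $B=I+tA$ gives
\[
C_{li}(I+tA)+t\sum_{j=1}^{n}a_{ij}\,C_{lj}(I+tA)=\Big(\sum_{k\ge0}\sigma_k(A)\,t^{k}\Big)\delta_{il}.
\]
Substituting the power series for the cofactors from the previous step converts this into an identity between two formal power series in $t$, and the rest of the proof is bookkeeping.

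Finally, I would compare the coefficient of $t^{k-1}$ on both sides. On the left it is $\sigma_k^{li}(A)+\sum_{j}a_{ij}\,\sigma_{k-1}^{lj}(A)$ (with the convention $\sigma_0^{lj}=0$ taking care of the case $k=1$), and on the right it is $\sigma_{k-1}(A)\,\delta_{il}$. Solving for $\sigma_k^{li}(A)$ and then relabelling the indices $l\mapsto i$, $i\mapsto j$, $j\mapsto l$ yields precisely
\[
\sigma_k^{ij}(A)=\sigma_{k-1}(A)\,\delta_{ij}-\sum_{l=1}^{n}\sigma_{k-1}^{il}(A)\,a_{jl},
\]
which is the asserted formula.

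I do not expect a genuine obstacle here: the generating-function identity $\det(I+tA)=\sum\sigma_k(A)t^k$ and Jacobi's formula both hold verbatim for non-symmetric matrices, so the argument never uses $A=A^{T}$. The only point requiring care is the index bookkeeping --- in particular keeping track of which slot of $a$ is contracted, so that the final formula contains $a_{jl}$ rather than $a_{lj}$; this is exactly where non-symmetry would otherwise cause trouble. As an alternative route, one could differentiate the explicit expression $\sigma_k(A)=\frac1{k!}\sum\delta_{i_1\cdots i_k}^{j_1\cdots j_k}a_{i_1j_1}\cdots a_{i_kj_k}$ directly in $a_{ij}$ and then expand the generalized Kronecker symbol $\delta_{i\,i_2\cdots i_k}^{j\,j_2\cdots j_k}$ by cofactors along its first column in order to peel off the term $\delta_{ij}\sigma_{k-1}(A)$; this gives the same identity but demands more attention to the combinatorial constants.
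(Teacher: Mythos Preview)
Your argument is correct. The generating-function identity $\det(I+tA)=\sum_k\sigma_k(A)t^k$, Jacobi's formula, and the Laplace cofactor relation you invoke are all valid for arbitrary (non-symmetric) square matrices, and your coefficient comparison at $t^{k-1}$ is accurate, including the boundary case $k=1$ via $\sigma_0^{lj}=0$. The relabelling at the end is right and indeed produces the $a_{jl}$ (not $a_{lj}$) factor that the statement requires.

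However, there is nothing to compare against: the paper does \emph{not} supply its own proof of this proposition. It is stated with an attribution to Pietra, Gavitone and Xia and then used as a black box in the proofs of Propositions~\ref{prop::change} and~\ref{eq::divergence-free}. So your generating-function proof is a self-contained addition rather than an alternative to anything in the present paper. If one looks at the cited source, the argument there proceeds by directly manipulating the generalized Kronecker-delta expression for $\sigma_k(A)$ --- essentially the ``alternative route'' you sketch in your final sentence --- so your approach via $\det(I+tA)$ is a genuinely different and arguably cleaner derivation that avoids the combinatorial bookkeeping with the $\delta$-symbols.
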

	\begin{proposition}\label{prop::change}
		For any $n\times n$ matrix $A=(a_{ij})$, we have
		\begin{equation}\label{eq::change}
			\sigma_k^{il}(A)a_{jl}=\sigma_k^{lj}(A)a_{li}.
		\end{equation}
	\end{proposition}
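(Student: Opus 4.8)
The plan is to express both sides of \eqref{eq::change} in terms of the generalized Kronecker symbols and then match them by a relabelling of summation indices. Recall that, by definition,
\begin{equation*}
\sigma_k(A)=\frac1{k!}\sum \delta_{i_1\cdots i_k}^{j_1\cdots j_k}a_{i_1j_1}\cdots a_{i_kj_k},
\end{equation*}
so differentiating with respect to $a_{il}$ and using the full symmetry of $\delta$ under simultaneous permutation of the upper and lower index strings gives
\begin{equation*}
\sigma_k^{il}(A)=\frac1{(k-1)!}\sum \delta_{i\, i_2\cdots i_k}^{l\, j_2\cdots j_k}a_{i_2j_2}\cdots a_{i_kj_k}.
\end{equation*}
First I would substitute this into the left-hand side $\sigma_k^{il}(A)a_{jl}$, obtaining a sum over $i_2,\dots,i_k,j_2,\dots,j_k$ of $\delta_{i\, i_2\cdots i_k}^{l\, j_2\cdots j_k}\,a_{jl}\,a_{i_2j_2}\cdots a_{i_kj_k}$, divided by $(k-1)!$, with a free sum over the repeated index $l$. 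The same computation applied to the right-hand side $\sigma_k^{lj}(A)a_{li}$ yields $\frac1{(k-1)!}\sum \delta_{l\, i_2\cdots i_k}^{j\, j_2\cdots j_k}\,a_{li}\,a_{i_2j_2}\cdots a_{i_kj_k}$.

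The key step is to see that these two multi-sums are identical after renaming indices. On the left-hand side, the index $l$ appears in the lower string of $\delta$ (in the first slot) and as the second subscript of the factor $a_{jl}$; the index $i$ appears in the upper string (first slot) and is otherwise absent. On the right-hand side, $l$ appears in the upper string (first slot) and as the second subscript — no, as the first subscript of $a_{li}$; and $i$ appears in the lower string. The crucial observation is that $\delta_{i\, i_2\cdots i_k}^{l\, j_2\cdots j_k}$ is, up to the sign convention, antisymmetric/totally-alternating in a way that lets us transpose the roles of the upper and lower first slots: concretely, the term $\delta_{i\, i_2\cdots i_k}^{l\, j_2\cdots j_k}a_{jl}$ summed over $l$ equals $\delta_{l\, i_2\cdots i_k}^{j\, j_2\cdots j_k}a_{li}$ summed over $l$ once we relabel. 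I would make this precise by writing $a_{jl}=\sum_m\delta_{mj}\cdot a_{ml}\cdot$ (trivially) or, more cleanly, by introducing the $n\times n$ minor expansion: $\sigma_k^{il}(A)$ is (up to sign) the sum of $(k-1)\times(k-1)$ principal-type minors obtained by deleting row $i$ and column $l$, so $\sum_l \sigma_k^{il}(A)a_{jl}$ is a cofactor-times-row contraction that, by the standard Laplace/Cauchy–Binet identity for the symmetric functions, is symmetric under $(i,j)\mapsto(j,i)$ together with the transpose. An efficient route is to invoke Proposition \ref{prop::expand}: by \eqref{eq::expand},
\begin{equation*}
\sigma_k^{il}(A)a_{jl}=\sigma_{k-1}(A)a_{ji}-\sum_{m}\sigma_{k-1}^{im}(A)a_{lm}a_{jl},
\end{equation*}
and the analogous expansion of the right-hand side produces $\sigma_{k-1}(A)a_{ji}-\sum_m\sigma_{k-1}^{mj}(A)a_{mi}a_{ml}$; an induction on $k$ (base case $k=1$, where $\sigma_1^{il}=\delta_{il}$ and the identity reads $a_{ji}=a_{ji}$) then reduces the claim to the inductive hypothesis applied to $\sigma_{k-1}$ after contracting the quadratic-in-$a$ terms appropriately.

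The main obstacle is purely bookkeeping: keeping the sign conventions in the generalized Kronecker symbol straight while transposing the distinguished pair of indices, and — if one goes the inductive route — correctly matching the double-contraction $\sum_{l,m}\sigma_{k-1}^{im}(A)a_{lm}a_{jl}$ against $\sum_{l,m}\sigma_{k-1}^{mj}(A)a_{mi}a_{ml}$ using the inductive hypothesis for $\sigma_{k-1}$. I expect no genuine difficulty beyond careful index manipulation; the statement is an algebraic identity valid for every matrix $A$, with no positivity or admissibility hypothesis needed.
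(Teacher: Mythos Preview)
Your inductive route via Proposition~\ref{prop::expand} is exactly the paper's proof: base case $k=1$ is trivial, then expand both sides using \eqref{eq::expand} and match the quadratic-in-$a$ terms by the inductive hypothesis followed by a relabelling. One small correction to your bookkeeping: the expansion of the right-hand side should read $\sigma_{k-1}(A)a_{ji}-\sum_{l,m}\sigma_{k-1}^{lm}(A)a_{jm}a_{li}$ (not $\sigma_{k-1}^{mj}a_{mi}a_{ml}$), after which applying the level-$(k-1)$ identity to $\sum_m\sigma_{k-1}^{im}(A)a_{lm}$ in the left-hand quadratic term and relabelling $(l,m)\to(m,l)$ makes the two sides coincide.
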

	\begin{proof}
		We prove it by induction. For $k=1$, $\sigma_k^{il}=\delta_{il}$, \eqref{eq::change} holds naturally. Suppose it holds for $k-1$, then
		\begin{equation}
			\begin{aligned}
				\sigma_{k}^{il}(A)a_{jl}=&(\sigma_{k-1}(A)\delta_{il}-\sigma_{k-1}^{is}(A)a_{ls})a_{jl}\\
				=&\sigma_{k-1}(A)a_{ji}-\sigma_{k-1}^{is}(A)a_{ls}a_{jl}\\
				=&\sigma_{k-1}(A)a_{ji}-\sigma_{k-1}^{sl}(A)a_{si}a_{jl}.
			\end{aligned}
		\end{equation}
		While
		\begin{equation}
			\begin{aligned}
				\sigma_k^{lj}(A)a_{li}=&(\sigma_{k-1}(A)\delta_{lj}-\sigma_{k-1}^{lm}(A)a_{jm})a_{li}\\
				=&\sigma_{k-1}(A)a_{ji}-\sigma_{k-1}^{lm}(A)a_{jm}a_{li}.
			\end{aligned}
		\end{equation}
		Hence \eqref{eq::change} holds for $k$.
	\end{proof}
	\begin{proposition}\label{eq::divergence-free}
		Suppose $X=(X_1,\cdots, X_n)$ is a vector field, $A=(a_{ij})$ is an $n\times n$ matrix with $a_{ij}={\partial_ j}X_i:=X_{i,j}$. Then
		\begin{equation}\label{eq::divergence-free}
			\partial_j\sigma_k^{ij}(A)=0.
		\end{equation}
	\end{proposition}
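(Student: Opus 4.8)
The plan is to prove the divergence-free property \eqref{eq::divergence-free} by induction on $k$, using the expansion formula \eqref{eq::expand} from Proposition \ref{prop::expand} together with the symmetry relation \eqref{eq::change} from Proposition \ref{prop::change}. The base case $k=1$ is immediate: $\sigma_1^{ij}(A)=\delta_{ij}$ is constant, so $\partial_j\sigma_1^{ij}(A)=0$. For the inductive step, assume $\partial_j\sigma_{k-1}^{ij}(A)=0$ and differentiate the identity
\begin{equation*}
	\sigma_k^{ij}(A)=\sigma_{k-1}(A)\delta_{ij}-\sum_{l=1}^n\sigma_{k-1}^{il}(A)a_{jl}
\end{equation*}
with respect to $x_j$ and sum over $j$.

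Carrying this out, $\partial_j\big(\sigma_{k-1}(A)\delta_{ij}\big)=\partial_i\sigma_{k-1}(A)$, and by the chain rule together with \eqref{formula::sigmak::1} one has $\partial_i\sigma_{k-1}(A)=\sigma_{k-1}^{lm}(A)\,\partial_i a_{lm}=\sigma_{k-1}^{lm}(A)\,X_{l,mi}$, since differentiating $\sigma_{k-1}(A)$ produces the sum of its partial derivatives against $\partial_i a_{lm}$. For the second term, $\partial_j\big(\sigma_{k-1}^{il}(A)a_{jl}\big)=\big(\partial_j\sigma_{k-1}^{il}(A)\big)a_{jl}+\sigma_{k-1}^{il}(A)\,\partial_j a_{jl}$. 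The first piece here vanishes by the induction hypothesis (the divergence of $\sigma_{k-1}^{i\,\cdot}$ is zero), and the second piece equals $\sigma_{k-1}^{il}(A)\,X_{j,lj}$. So after cancellation we are left with showing $\sigma_{k-1}^{lm}(A)\,X_{l,mi}=\sigma_{k-1}^{il}(A)\,X_{j,lj}$. Here the key is that $a_{ij}=X_{i,j}$ comes from a gradient-like structure, so mixed partials commute: $X_{l,mi}=X_{l,im}=\partial_m a_{li}$ and $X_{j,lj}=X_{j,jl}=\partial_l a_{jj}$. Relabelling indices and invoking Proposition \ref{prop::change} in the form $\sigma_{k-1}^{il}(A)a_{jl}=\sigma_{k-1}^{lj}(A)a_{li}$ (differentiated appropriately) should match the two sides.

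The main obstacle is bookkeeping: one must be careful that the symmetry of second derivatives $X_{i,jl}=X_{i,lj}$ is exactly what converts the term coming from $\partial_i\sigma_{k-1}(A)$ into the term coming from $\sigma_{k-1}^{il}(A)\,\partial_j a_{jl}$, and that the summation indices are contracted consistently when applying Proposition \ref{prop::change}. A clean way to organize this is to write everything in terms of the second derivatives of $X$, note that $\partial_j a_{il}=X_{i,lj}$ is symmetric in the pair $(l,j)$, and then apply \eqref{eq::change} with $k-1$ in place of $k$ to see that the surviving terms coincide. Once the indices are aligned the cancellation is forced, completing the induction and hence the proof.
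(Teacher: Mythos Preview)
Your overall plan (induction on $k$, base case $k=1$, expand via Proposition~\ref{prop::expand}) matches the paper, but there is a genuine gap in the inductive step. You write
\[
\partial_j\big(\sigma_{k-1}^{il}(A)a_{jl}\big)=\big(\partial_j\sigma_{k-1}^{il}(A)\big)a_{jl}+\sigma_{k-1}^{il}(A)\,\partial_j a_{jl}
\]
and then claim ``the first piece here vanishes by the induction hypothesis.'' It does not. The induction hypothesis is $\sum_j\partial_j\sigma_{k-1}^{ij}(A)=0$, i.e.\ the derivative index must coincide with the \emph{second} superscript of $\sigma_{k-1}^{\cdot\cdot}$. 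In your expression the derivative is in $j$ while the second superscript is $l$, and $j,l$ are summed independently against $a_{jl}$; there is no reason for $\sum_{j,l}(\partial_j\sigma_{k-1}^{il})a_{jl}$ to vanish. Consequently the ``remaining'' identity you propose, $\sigma_{k-1}^{lm}X_{l,mi}=\sigma_{k-1}^{il}X_{j,lj}$, is not what is actually left over, and in fact is false in general (the left side is $\partial_i\sigma_{k-1}$ while the right side is $\sum_l\sigma_{k-1}^{il}\partial_l(\operatorname{div}X)$).

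The repair is exactly what the paper does: apply Proposition~\ref{prop::change} \emph{before} differentiating rather than after. Rewrite $\sigma_{k-1}^{il}a_{jl}=\sigma_{k-1}^{lj}a_{li}$ first, so that
\[
\partial_j\sigma_k^{ij}=\partial_i\sigma_{k-1}-\big(\partial_j\sigma_{k-1}^{lj}\big)a_{li}-\sigma_{k-1}^{lj}\partial_j a_{li}.
\]
Now the middle term genuinely has the divergence form $\partial_j\sigma_{k-1}^{lj}$ and vanishes by the induction hypothesis. For the last term use $\partial_j a_{li}=X_{l,ij}=X_{l,ji}=\partial_i a_{lj}$, giving $\sigma_{k-1}^{lj}\partial_i a_{lj}=\partial_i\sigma_{k-1}$, which cancels the first term. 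So the order in which you invoke \eqref{eq::change} and differentiate is not a bookkeeping detail here; it is precisely what makes the induction hypothesis applicable.
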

	\begin{proof}
		We prove it also by induction. For $k=1$, $\sigma_1^{ij}(A)=\delta_{ij}$, then $\partial_j \sigma_1^{ij}(A)=\partial_j\delta_{ij}=0$. Suppose \eqref{eq::divergence-free} holds for $k-1$, that is
		\begin{equation}\label{induction-assumption-prop-divergence-free}
			\partial_j\sigma_{k-1}^{ij}(A)=0.
		\end{equation}
		Then
		\begin{equation}
			\begin{aligned}
				\partial_j\sigma_k^{ij}(A)=&\partial _j (\sigma_{k-1}\delta_{ij}-\sigma_{k-1}^{il}a_{jl})=\partial_j(\sigma_{k-1}\delta_{ij}-\sigma_{k-1}^{lj}a_{li})\\
				=&\partial_i\sigma_{k-1}(A)-\partial_j\sigma_{k-1}^{lj}(A)a_{li}-\sigma_{k-1}^{lj}\partial_ja_{li}\\=&\partial_i\sigma_{k-1}(A)-\sigma_{k-1}^{lj}\partial_ia_{lj}=0,
			\end{aligned}
		\end{equation}
		where we use proposition \ref{prop::change} in the second equality, we use \eqref{induction-assumption-prop-divergence-free} and the fact $\partial_j a_{li}=\partial_j(\partial_iX_{l})=\partial_i(\partial_jX_l)=\partial_ia_{lj}$ in the fourth equality. Thus \eqref{eq::divergence-free} holds for $k$.
	\end{proof}
	
	\begin{proposition}\label{prop::domination}
		Suppose $u\in\Phi^{p,k}(\Omega)$. Then in $\Omega\setminus\{x:|Du(x)|=0\}$, there holds
		\begin{equation}\label{eq::domination}
			\sigma_s^{ij}(D(|Du|^{p-2}Du))\leq \sum_{l=1}^n\sigma_s^{ll}(D(|Du|^{p-2}Du)),\quad s=1,\cdots,k,\ i,j=1,\cdots,n.
		\end{equation}
	\end{proposition}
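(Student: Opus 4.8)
The plan is to pass to an orthonormal frame adapted to $Du$, which turns $D(|Du|^{p-2}Du)$ into a conjugate of a symmetric matrix lying in $\Gamma_k$, and then to deduce \eqref{eq::domination} from the positive semidefiniteness of the Newton tensor of that symmetric matrix together with a trace identity.

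Fix $x_0\in\Omega$ with $Du(x_0)\ne0$ and set $A:=D(|Du|^{p-2}Du)(x_0)$. Differentiating, $A=|Du|^{p-2}\bigl(I+(p-2)\,\nu\otimes\nu\bigr)D^2u$ with $\nu:=Du/|Du|$, so $A$ is the product of the symmetric positive definite matrix $M:=I+(p-2)\,\nu\otimes\nu$ with $D^2u$; hence $A$ is similar to the symmetric matrix $S:=M^{1/2}(D^2u)M^{1/2}$, its eigenvalues are real, and $\lambda(S)=\lambda(A)\in\Gamma_k$ by $p$-$k$-admissibility. Each $\sigma_s^{ij}$ is homogeneous of degree $s-1$ in the matrix entries and $|Du|^{p-2}>0$, so both sides of \eqref{eq::domination} scale identically under $A\mapsto|Du|^{p-2}A$; thus I may normalize $|Du|=1$, i.e.\ take $A=M^{1/2}SM^{-1/2}$. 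Proposition \ref{prop::expand} — set $i=j$ there and sum over $i$ — together with \eqref{formula::sigmak::1} (with $k$ replaced by $s-1$) gives the trace identity
\[
\sum_{l=1}^n\sigma_s^{ll}(A)=(n-s+1)\,\sigma_{s-1}(A)\ge0,
\]
the inequality because $\lambda(A)\in\Gamma_k$ and $s-1\le k$. Consequently it suffices to bound every entry $\sigma_s^{ij}(A)$ by this nonnegative number.

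Now choose the frame so that $e_1=\nu$; then $M^{1/2}=\mathrm{diag}(\sqrt{p-1},1,\dots,1)$ is diagonal. Writing $(\mu_1,\dots,\mu_n)=(\sqrt{p-1},1,\dots,1)$ and differentiating $\sigma_s$ along $A=M^{1/2}SM^{-1/2}$ (a direct chain-rule computation, using that $\sigma_s$ is similarity invariant) yields the covariance relation $\sigma_s^{ij}(A)=(\mu_j/\mu_i)\,\sigma_s^{ij}(S)$; in particular $\sigma_s^{ii}(A)=\sigma_s^{ii}(S)$ and $\sum_l\sigma_s^{ll}(A)=\sum_l\sigma_s^{ll}(S)$. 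Since $S$ is symmetric with $\lambda(S)\in\Gamma_k\subseteq\Gamma_s$, its Newton tensor $N$ — the symmetric $n\times n$ matrix with $(i,j)$ entry $\sigma_s^{ij}(S)$ — is positive semidefinite: diagonalizing $S=O\Lambda O^{T}$ turns $N$ into $O\,\mathrm{diag}\bigl(\sigma_{s-1}(\lambda|1),\dots,\sigma_{s-1}(\lambda|n)\bigr)O^{T}$, and each $\sigma_{s-1}(\lambda|i)\ge0$ (here $(\lambda|i)$ denotes $\lambda$ with its $i$th component deleted) because $\lambda\in\Gamma_k$ forces $(\lambda|i)\in\Gamma_{k-1}\subseteq\Gamma_{s-1}$ — a standard property of the cones $\Gamma_m$. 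From $N\succeq0$ one has $N_{ii}\ge0$, hence for $i=j$,
\[
\sigma_s^{ii}(A)=N_{ii}\le\sum_{l=1}^nN_{ll}=\sum_{l=1}^n\sigma_s^{ll}(A),
\]
which is the asserted bound; and for $i\ne j$, using $|N_{ij}|\le\sqrt{N_{ii}N_{jj}}\le\tfrac12(N_{ii}+N_{jj})\le\tfrac12\sum_lN_{ll}$ and that the ratios $\mu_j/\mu_i$ take only the values $1,\sqrt{p-1},1/\sqrt{p-1}$, one bounds $\sigma_s^{ij}(A)=(\mu_j/\mu_i)N_{ij}$ by $\sum_l\sigma_s^{ll}(A)$, which is \eqref{eq::domination}.

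The step I expect to be the real obstacle is this last off-diagonal estimate. The diagonal case is automatic once $N\succeq0$ is in hand, but for $i\ne j$ one must control the anisotropy factor $\mu_j/\mu_i$ — which is precisely the manifestation of the non-symmetry of $A$ when $p\ne2$ — and make sure it does not destroy the gain coming from $\sqrt{N_{ii}N_{jj}}\le\tfrac12\sum_lN_{ll}$. This is where the precise structure $M=I+(p-2)\,\nu\otimes\nu$ (and not merely the hypothesis $\lambda(A)\in\Gamma_k$) is used; indeed it is the only place in the argument where the specific form of the operator $F_{k,p}$ enters. Everything else — the adapted frame, the trace identity, and the semidefiniteness of the Newton tensor of $S$ — is routine and frame independent.
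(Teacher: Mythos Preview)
Your approach mirrors the paper's exactly: conjugate $A=D(|Du|^{p-2}Du)$ by $M^{1/2}$ to the symmetric matrix $S$, use $\lambda(S)\in\Gamma_k$ to make the Newton tensor $N=(\sigma_s^{ij}(S))\succeq0$, and transfer back via the covariance relation $\sigma_s^{ij}(A)=(\mu_j/\mu_i)N_{ij}$. The paper does not pass to an adapted frame, but its final line --- ``By \eqref{eq::eq2}, \eqref{eq::eq3} and \eqref{eq::eq4}, we obtain $|\sigma_k^{ij}|\le\sum_m\tilde\sigma_k^{mm}=\sum_m\sigma_k^{mm}$'' --- is exactly the off-diagonal step you flag, asserted there without further justification.

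Your caution about that step is well placed: the inequality \eqref{eq::domination} with constant~$1$ is in fact \emph{false} once $p>5$. In the adapted frame take $n=k=s=2$, $p=10$, and $S=\bigl(\begin{smallmatrix}1&-1\\-1&1\end{smallmatrix}\bigr)$, which is positive semidefinite so that $\lambda(S)\in\overline{\Gamma_2}$ (a small perturbation lies strictly inside and is realized locally by a $p$-$2$-admissible function). Then $\sigma_2^{21}(A)=(\mu_1/\mu_2)\,\sigma_2^{21}(S)=\sqrt{p-1}\cdot(-S_{12})=3$, while $\sum_l\sigma_2^{ll}(A)=\operatorname{tr}N=2$. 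So the anisotropy factor you worry about really does destroy the bound, and no argument --- yours or the paper's --- can close this gap as stated. What your computation \emph{does} deliver, once one keeps the factor $\mu_j/\mu_i$ honestly, is
\[
\bigl|\sigma_s^{ij}(A)\bigr|\ \le\ \max\bigl\{\sqrt{p-1},\,(p-1)^{-1/2}\bigr\}\sum_{l=1}^n\sigma_s^{ll}(A),
\]
and since every use of Proposition~\ref{prop::domination} in Section~\ref{sec3} only requires $|\sigma_{k-s+1}^{ij}u_i\eta_j|\le C\,\sigma_{k-s}\,|Du|\,|D\eta|$ for some $C=C(n,p)$ independent of $R$ and $u$, this weaker inequality is entirely sufficient for the paper's purposes.
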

	\begin{proof}
		For any point $x_0$ in $\Omega\setminus\{x:|Du(x)|=0\}$, we claim that $u\in C^1$ and $|Du|^{p-2}Du\in C^1$ if and only if $u\in C^2$. The necessity is obvious. Now we prove the sufficiency. For any $i,j\in \{1,\cdots,n\}$, since $(|Du|^{p-2}u_i)_j$ exists at $x_0$, we have
		\begin{equation}
			\begin{aligned}
				&\lim_{t\rightarrow 0} \frac{|Du(x_0+te_j)|^{p-2}u_i(x_0+te_j)-|Du(x_0)|^{p-2}u_i(x_0)}t\\
				=&|Du(x_0)|^{p-2}(\delta_{il}-(p-2)\frac{u_i(x_0)u_l(x_0)}{|Du(x_0)|^2})\lim_{t\rightarrow 0}\frac{u_l(x_0+te_j)-u_l(x_0)}t
			\end{aligned}
		\end{equation}
		Let
		\begin{equation}
			B_{ij}:=|Du|^{p-2}(\delta_{ij}+(p-2)\frac{u_iu_j}{|Du|^2}).
		\end{equation}
		Since  $B_{ij}$ is positive definite provided $p>1$, $B$ is invertible. Thus we prove the sufficiency.
		
		In the following, we do the following computations at $x_0$. Note that
		\begin{equation}
			(|Du|^{p-2}u_i)_j=|Du|^{p-2}u_{ij}+(p-2)|Du|^{p-4}u_mu_{mj}u_i=B_{im}u_{mj}.
		\end{equation}
		If we take $B^\frac12$ and $B^{-\frac12}$  positive definite matrices,
		\begin{equation}\label{eq::eq2}
			B^\frac12_{ij}=|Du|^\frac{p-2}2(\delta_{ij}+(\sqrt{p-1}-1)|Du|^{-2}u_iu_j),
		\end{equation}
		and
		\begin{equation}\label{eq::eq3}
			B^{-\frac12}_{ij}=|Du|^\frac{2-p}2(\delta_{ij}-\frac{\sqrt{p-1}-1}{\sqrt{p-1}}\frac{u_iu_j}{|Du|^2}).
		\end{equation}
		It is easy to check that the matrices defined in \eqref{eq::eq2} and \eqref{eq::eq3} are the  square root matrices of $B$ and $B^{-1}$ respectively.
		Then
		\begin{equation}\label{eq::eq1}
			\frac{\partial(B^\frac12D^2uB^\frac12)_{ml}}{\partial (BD^2u)_{ij}}=B^{-\frac12}_{mi}B^{\frac12}_{jl}.
		\end{equation}
		Since the eigenvalues of $D(|Du|^{p-2}Du)$ and $B^\frac12D^2uB^\frac12$ are the same, we obtain,
		$$F_{k,p}[u]=\sigma_k(D(|Du|^{p-2}Du))=\sigma_k(B^\frac12D^2uB^\frac12).$$
		If we denote by $\sigma_k^{ij}:=\frac{\partial\sigma_k(BD^2u)}{\partial(BD^2u)_{ij}}$ and $\tilde\sigma_k^{ij}:=\frac{\partial\sigma_k(B^\frac12D^2uB^\frac12)}{\partial(B^\frac12D^2uB^\frac12)_{ij}}$. It follows from $\lambda(D(|Du|^{p-2}Du))\in\Gamma_k$ that $\tilde \sigma_k^{ij}$ is positive defined. So
		\begin{equation}\label{eq::eq4}
			|\tilde \sigma_k^{ij}|\leq \sum_{l=1}^n\tilde\sigma_k^{ll}.
		\end{equation}
		By \eqref{eq::eq1}, we get
		\begin{equation}
			\sigma_{k}^{ij}=\tilde\sigma_k^{ml}B^{-\frac12}_{im}B^\frac12_{jl}.
		\end{equation}
		By \eqref{eq::eq2}, \eqref{eq::eq3} and \eqref{eq::eq4}, we obtain
		\begin{equation}
			|\sigma_k^{ij}|\le \sum_{m=1}^n\tilde\sigma_k^{mm}=\sum_{m=1}^n\sigma_k^{mm}.
		\end{equation}
	\end{proof}


	\section{Proof of Theorem \ref{mainth}}\label{sec3}
	
	In this section, we first prove \eqref{mainineq} admits negative admissible solution when  $\alpha>k_{p,*}$.
	
	Let $w=-C_*\big(A+|x|^\frac p{p-1}\big)^{-\frac{k(p-1)}{\alpha-(p-1)k}}$, $C_*$ is to be determined later. Then
		$$\begin{aligned}
			(|Dw|^{p-2}w_i)_j=&\tilde CC_*^{p-1}\big(A+|x|^\frac p{p-1}\big)^{-\frac{\alpha(p-1)}{\alpha-(p-1)k}}\delta_{ij}\\
			&-\frac{\alpha(p-1)}{\alpha-(p-1)k}\frac{p}{p-1}\tilde CC_*^{p-1}\big(A+|x|^\frac p{p-1}\big)^{-\frac{\alpha(p-1)}{\alpha-(p-1)k}-1}|x|^\frac{p}{p-1}\frac{x_ix_j}{|x|^2}\\
			=&\tilde CC_*^{p-1}\big(A+|x|^\frac p{p-1}\big)^{-\frac{\alpha(p-1)}{\alpha-(p-1)k}-1}\\&\quad\bigg(\big(A+|x|^\frac p{p-1}\big)\delta_{ij}-\frac{\alpha(p-1)}{\alpha-(p-1)k}\frac{p}{p-1}{|x|^\frac p{p-1}}\frac{x_ix_j}{|x|^2}\bigg)\\
			=&\tilde CC_*^{p-1}\big(A+|x|^\frac p{p-1}\big)^{-\frac{\alpha(p-1)}{\alpha-(p-1)k}}\\&\quad\bigg(\delta_{ij}-\frac{\alpha(p-1)}{\alpha-(p-1)k}\frac{p}{p-1}\frac{|x|^\frac p{p-1}}{A+|x|^\frac p{p-1}}\frac{x_ix_j}{|x|^2}\bigg),
		\end{aligned}$$
		where $\tilde C=\frac{kp}{\alpha-(p-1)k}\big|\frac{kp}{\alpha-(p-1)k}\big|^{p-2}$. By direct computation,
			$$\begin{aligned}
				F_{k,p}[w]=&\tilde C^kC_*^{k(p-1)}C_n^k\big(A+|x|^\frac p{p-1}\big)^{-\frac{\alpha(p-1)(k-1)}{\alpha-(p-1)k}-(k-1)}\bigg(\big(A+|x|^\frac p{p-1}\big)-\frac kn\frac{\alpha(p-1)}{\alpha-(p-1)k}\frac{p}{p-1}{|x|^\frac p{p-1}}\bigg)\\
				=&\tilde C^kC_*^{k(p-1)}C_n^k\big(A+|x|^\frac p{p-1}\big)^{-\frac{\alpha(p-1)(k-1)}{\alpha-(p-1)k}-(k-1)}|x|^\frac p{p-1}\bigg(1-\frac kn\frac{\alpha(p-1)}{\alpha-(p-1)k}\frac{p}{p-1}\bigg)\\
				&+A\tilde C^kC_*^{k(p-1)}C_n^k\big(A+|x|^\frac p{p-1}\big)^{-\frac{\alpha(p-1)(k-1)}{\alpha-(p-1)k}-(k-1)}.
			\end{aligned}	$$
			When $\alpha>k_{p,*}$, $w\in \Phi^{p,k}(\mathbb R^n)$ and $F_{k,p}[w]\geq (-w)^\alpha$ for $A\in [0,\infty)$ after we choose a suitable $C_*=\big(\frac{(n-1)!}{k!(n-k)!}\frac{(pk)^{(p-1)k}(\alpha(n-pk)-n(p-1)k)}{(\alpha-(p-1)k)^{(p-1)k+1}}\big)^\frac1{\alpha-(p-1)k}>0$.
			
			Now we prove there is no negative admissible solution to \eqref{mainineq}   when  $\alpha\leq k_{p,*}$.
			Assume $u<0$ be a solution of \eqref{mainineq} in $\Gamma_k$. In the following, we use $\sigma_k$ and $\sigma_k^{ij}$ instead of $\sigma_k(D(|Du|^{p-2}Du))$ and $\frac{\partial \sigma_k(D(|Du|^{p-2}Du))}{\partial (|Du|^{p-2}u_i)_j}$ respectively for convenience.
			
			Let $\eta$ be a $C^2$ cut-off function satisfying:
			\begin{equation}\label{eq::prop::eta}
				\begin{cases}
					\eta\equiv 1&\quad\text{in }B_R,\\
					0\leq \eta\leq 1&\quad\text{in }B_{2R},\\
					\eta\equiv 0&\quad\text{in }\mathbb R^n\backslash B_{2R},\\
					|D\eta|\leq \frac{C}{R}&\quad\text{in }\mathbb R^n,
				\end{cases}
			\end{equation}
			where and in the following $C$ is a constant independent of $R$ and $u$.
			
			For $s=1,\cdots, k$, denote
			\begin{equation}\label{eq::expression::Es}
				\begin{split}
					&B_s:=\int_{\mathbb R^n}\sigma_{k-s}|Du|^{sp}(-u)^{-\delta-s}\eta^\theta,\\
					&M_s:=\int_{\mathbb R^n}\sigma_{k-s+1}^{ij}|Du|^{sp-2}u_iu_j(-u)^{-\delta-s}\eta^\theta,\\
					&E_s:=\int_{\mathbb R^n}\sigma_{k-s+1}^{ij}|Du|^{sp-2}u_i\eta_j(-u)^{-\delta-s+1}\eta^{\theta-1},\\
					&b_s:=\frac1{s!}(1-\frac1p)^{s-1}(k-\frac{k-s}{p})\prod_{j=0}^{s-1}(\delta+j),
				\end{split}
			\end{equation}
			and
			\begin{equation}
				e_1:=\theta,\, e_s:=\theta\frac1{(s-1)!}(1-\frac1p)^{s-1}\prod_{j=0}^{s-2}(\delta+j)\quad s=2,\cdots,k.
			\end{equation}
			
			We have the following expansion,
			\begin{lemma}
				\begin{equation}\label{eq::expansion}
					k\int_{\mathbb R^n}\sigma_k(-u)^{-\delta}\eta^\theta=-\sum_{s=1}^kb_sB_s-\sum_{s=1}^ke_sE_s.
				\end{equation}
			\end{lemma}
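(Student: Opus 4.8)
The plan is to start from the quantity on the left, $k\int_{\mathbb R^n}\sigma_k(-u)^{-\delta}\eta^\theta$, rewrite $k\sigma_k=\sum_{i,j}\sigma_k^{ij}(|Du|^{p-2}u_i)_j$ via \eqref{formula::sigmak::1}, and then integrate by parts in the variable $x_j$, moving the derivative off the $j$-th slot of the vector field $|Du|^{p-2}Du$. The divergence-free property from Proposition \ref{eq::divergence-free}, namely $\partial_j\sigma_k^{ij}=0$, guarantees that integrating by parts produces no term hitting $\sigma_k^{ij}$; the derivative lands only on the scalar weight $(-u)^{-\delta}\eta^\theta$. Differentiating that weight gives two pieces: one with $\partial_j(-u)^{-\delta}=\delta(-u)^{-\delta-1}u_j$, and one with $\partial_j\eta^\theta=\theta\eta^{\theta-1}\eta_j$. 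Contracting each against $\sigma_k^{ij}|Du|^{p-2}u_i$ produces, respectively, a term of the shape $\int\sigma_k^{ij}|Du|^{p-2}u_iu_j(-u)^{-\delta-1}\eta^\theta$ — which is exactly $M_1$ — and a term of the shape $\int\sigma_k^{ij}|Du|^{p-2}u_i\eta_j(-u)^{-\delta}\eta^{\theta-1}$, which is $E_1$ (up to the relabeling $k-s+1=k$, i.e. $s=1$). So after one integration by parts we get $k\int\sigma_k(-u)^{-\delta}\eta^\theta = -\delta M_1 - \theta E_1$, recalling $e_1=\theta$.

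The next step is to process $M_1$. Here I would use the Newton-type identity \eqref{eq::expand} from Proposition \ref{prop::expand}, $\sigma_s^{ij}=\sigma_{s-1}\delta_{ij}-\sum_l\sigma_{s-1}^{il}a_{jl}$ with $a_{jl}=(|Du|^{p-2}u_j)_l$, to expand $\sigma_k^{ij}|Du|^{p-2}u_iu_j$. The $\sigma_{k-1}\delta_{ij}$ part contributes $\sigma_{k-1}|Du|^{p}(-u)^{-\delta-1}\eta^\theta$, which is $B_1$ (the $s=1$ case). The remaining part $-\sigma_{k-1}^{il}(|Du|^{p-2}u_j)_l|Du|^{p-2}u_iu_j$ is again of the form treated before, but with one more power of $|Du|$ and of $(-u)^{-1}$, and with $\sigma_{k-1}^{il}$ in place of $\sigma_k^{ij}$ — so one integrates by parts once more (again using $\partial_l\sigma_{k-1}^{il}=0$), generating a multiple of $M_2$ plus a multiple of $E_2$. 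Iterating this, at stage $s$ one peels off a $B_s$ term and pushes an $M_{s+1}$-type term down one level, accumulating combinatorial constants. The constants $b_s$ and $e_s$ are precisely the bookkeeping of the factors $\delta+j$ (from differentiating $(-u)^{-\delta-j}$), the factors $(1-1/p)$ (from $\partial_l|Du|^{p}$-type derivatives producing $|Du|^{p-2}u_mu_{ml}$ against $|Du|^{p-2}u_i$, which recombine with the coefficient structure of $B_{im}$), and the $1/s!$ from the telescoping; the factor $(k-\frac{k-s}{p})$ in $b_s$ should emerge from combining the trace term $\sigma_{s-1}\delta_{ij}$ contribution with the derivative of $|Du|^{sp}$. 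The recursion terminates at $s=k$ because $\sigma_0=1$ and $\sigma_0^{ij}=\delta_{ij}$, so no further $M_{k+1}$ term survives.

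The main obstacle I anticipate is \emph{not} the integration-by-parts mechanics — those are routine given Propositions \ref{prop::expand} and \ref{eq::divergence-free} — but rather keeping the combinatorial constants exactly right through the iteration, in particular verifying that the coefficient of $B_s$ is $b_s=\frac1{s!}(1-\frac1p)^{s-1}(k-\frac{k-s}{p})\prod_{j=0}^{s-1}(\delta+j)$ and the coefficient of $E_s$ is $e_s$ as stated. A secondary technical point is that all of this is being done where $|Du|$ may vanish: the expansions $\sigma_k^{ij}(D(|Du|^{p-2}Du))$ and the identities involving $B_{ij}=|Du|^{p-2}(\delta_{ij}+(p-2)|Du|^{-2}u_iu_j)$ are valid pointwise only on $\{|Du|\neq 0\}$, so one must argue that the critical set contributes nothing — either because $|Du|^{p-2}Du\in C^1$ forces the relevant integrands to extend continuously by zero (the powers of $|Du|$ appearing, $|Du|^{sp}$ and $|Du|^{sp-2}u_iu_j\sim|Du|^{sp}$, all vanish there since $p>1$ and $sp\geq 2$... for $s\ge1$, $sp-2\ge p-2>-1$ so $|Du|^{sp-2}|Du|^2=|Du|^{sp}\to0$), or by an approximation/cutoff of the critical set. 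I would handle this by noting that every integrand in $B_s,M_s,E_s$ carries a net nonnegative power of $|Du|$ and is bounded, hence the formal computation on $\{|Du|\ne0\}$ integrates up correctly. Once the Lemma is established, $\delta$ and $\theta$ remain free parameters to be chosen later (with $\theta$ large and $\delta$ in a suitable range) so that the $B_s$ and $E_s$ terms can be absorbed or estimated against each other in the proof of Theorem \ref{mainth}.
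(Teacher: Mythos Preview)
Your proposal is correct and follows essentially the same route as the paper: the paper too starts from $k\int\sigma_k(-u)^{-\delta}\eta^\theta=-\delta M_1-\theta E_1$ via \eqref{formula::sigmak::1} and the divergence-free identity, then uses Proposition~\ref{prop::expand} to derive the recursion $M_s=\tfrac1s(k-\tfrac{k-s}{p})B_s+(\delta+s)\tfrac1s(1-\tfrac1p)M_{s+1}+\theta\tfrac1s(1-\tfrac1p)E_{s+1}$, and iterates it to obtain \eqref{eq::expansion}. The only point you leave slightly implicit is the identity $(|Du|^{p-2}u_j)_l\,u_j=\tfrac{p-1}{p}(|Du|^p)_l$, which is what produces the clean factor $\tfrac1s(1-\tfrac1p)$ at each step and hence the exact constants $b_s,e_s$; once you record that, the bookkeeping you flagged as the main obstacle is immediate.
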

			\begin{proof}
				Let
				\begin{equation}\label{eq::ep::g}
					g=(-u)^{-\delta-s}\eta^\theta.
				\end{equation}
				By proposition \ref{prop::expand}, we have
				\begin{equation}\label{eq::ori::Ms}
					\begin{aligned}
						M_s=&\int_{\mathbb{R}^n}\sigma_{k-s+1}^{ij}|Du|^{sp-2}u_iu_jg\\
						=&\int_{\mathbb{R}^n}\big(\sigma_{k-s}\delta_{ij}-\sigma_{k-s}^{il}(|Du|^{p-2}u_j)_l\big)|Du|^{sp-2}u_iu_jg\\
						=&\int_{\mathbb{R}^n}\sigma_{k-s}|Du|^{sp}g-\int_{\mathbb{R}^n}\sigma_{k-s}^{il}(|Du|^{p-2}u_j)_l|Du|^{sp-2}u_iu_jg.
					\end{aligned}
				\end{equation}
				Note that
				\begin{equation}\label{eq::com::term-in-Ms}
					\begin{aligned}
						&\sigma_{k-s}^{il}(|Du|^{p-2}u_j)_l|Du|^{sp-2}u_iu_jg=\frac{p-1}{sp}\sigma_{k-s}^{il}|Du|^{p-2}u_i(|Du|^{sp})_lg\\
						=&\frac{p-1}{sp}(\sigma_{k-s}^{il}|Du|^{(s+1)p-2}u_ig)_l-\frac{p-1}{sp}\sigma_{k-s}^{il}(|Du|^{p-2}u_i)_l|Du|^{sp}g-\frac{p-1}{sp}\sigma_{k-s}^{il}|Du|^{(s+1)p-2}u_ig_l
					\end{aligned}
				\end{equation}
				Putting \eqref{eq::ep::g} and \eqref{eq::com::term-in-Ms} into \eqref{eq::ori::Ms}, we get
				\begin{equation}
					\begin{aligned}
						M_s=&\frac1s(k-\frac{k-s}p)\int_{\mathbb{R}^n}\sigma_{k-s}|Du|^{sp}(-u)^{-\delta-s}\eta^\theta\\
						&+(\delta+s)\frac1s(1-\frac1p)\int_{\mathbb{R}^n}\sigma_{k-s}^{ij}|Du|^{(s+1)p-2}u_iu_j(-u)^{-\delta-s-1}\eta^\theta\\
						&+\theta\frac1s(1-\frac1p)\int_{\mathbb{R}^n}\sigma_{k-s}^{ij}|Du|^{(s+1)p-2}u_i\eta_j(-u)^{-\delta-s}\eta^{\theta-1}.
					\end{aligned}
				\end{equation}
				That is
				\begin{equation}\label{eq::expansion::Ms}
					M_s=\frac1s(k-\frac{k-s}p)B_s+(\delta+s)\frac1s(1-\frac1p)M_{s+1}+\theta\frac1s(1-\frac1p)E_{s+1}.
				\end{equation}
				On the other hand, by \eqref{formula::sigmak::1}
				we obtain,
				\begin{equation}\label{eq::expansion::1}
					\begin{aligned}
						k\int_{\mathbb{R}^n}\sigma_k(-u)^{-\delta}\eta^\theta=&\int_{\mathbb{R}^n}\sigma_k^{ij}(|Du|^{p-2}u_i)_j(-u)^{-\delta}\eta^{\theta}\\
						=&-\delta\int_{\mathbb{R}^n}\sigma_k^{ij}|Du|^{p-2}(-u)^{-\delta-1}u_iu_j\eta^\theta-\theta\int_{\mathbb{R}^n}\sigma_{k}^{ij}|Du|^{p-2}u_i\eta_j(-u)^{-\delta}\eta^{\theta-1}\\
						=&-\delta M_1-\theta E_1.
					\end{aligned}
				\end{equation}
				Substituting \eqref{eq::expansion::Ms} into \eqref{eq::expansion::1} iteratively, we get \eqref{eq::expansion}. This finish the proof.
			\end{proof}
			Now we prove Theorem \ref{mainth}.
			\begin{proof}
				Multiply both sides of \eqref{mainineq} by $k(-u)^{-\delta}\eta^\theta$, we have
				\begin{equation}\label{eq::mainineq-1}
					k\int_{\mathbb{R}^n}(-u)^{\alpha-\delta}\eta^\theta\leq k\int_{\mathbb{R}^n}\sigma_k(-u)^{-\delta}\eta^\theta.
				\end{equation}
				Now we estimate the error term $E_s$. By proposition \ref{prop::domination} and $|D\eta|\leq \frac{C}{R}$, we have
				\begin{equation}
					|E_s|\leq \frac{C}{R}\int_{\mathbb{R}^n}\sigma_{k-s}|Du|^{sp-1}(-u)^{-\delta-s+1}\eta^{\theta-1}.
				\end{equation}
				Using Young's inequality with exponent pair $(\frac{ps}{ps-1},ps)$, $\forall \varepsilon>0$, the last inequaliy turns into
				\begin{equation}\label{eq::est::Es-1}
					|E_s|\leq \varepsilon\int_{\mathbb{R}^n}\sigma_{k-s}|Du|^{sp}(-u)^{-\delta-s}\eta^\theta+\frac{C_\varepsilon}{R^{sp}}\int_{\mathbb{R}^n}\sigma_{k-s}(-u)^{-\delta-(1-p)s}\eta^{\theta-sp}.
				\end{equation}
				Now we deal with the last term in \eqref{eq::est::Es-1}. Note that
				\begin{equation}
					\begin{aligned}
						&\frac{1}{R^{sp}}\int_{\mathbb{R}^n}\sigma_{k-s}(-u)^{-\delta-(1-p)s}\eta^{\theta-sp}\\\cong& \frac{1}{R^{sp}}\int_{\mathbb{R}^n}\sigma_{k-s}^{ij}(|Du|^{p-2}u_i)_j(-u)^{-\delta-(1-p)s}\eta^{\theta-sp}\\
						=&-\frac{\delta+s-sp}{R^{sp}}\int_{\mathbb{R}^n}\sigma_{k-s}^{ij}|Du|^{p-2}u_iu_j(-u)^{-\delta-s-1+sp}\eta^{\theta-sp}\\&-\frac{\theta-sp}{R^{sp}}\int_{\mathbb{R}^n}\sigma_{k-s}^{ij}|Du|^{p-2}u_i\eta_j(-u)^{-\delta-s+sp}\eta^{\theta-sp-1}\\
						\lesssim& \frac{1}{R^{sp}}\int_{\mathbb{R}^n}\sigma_{k-s-1}|Du|^p(-u)^{-\delta-s-1+sp}\eta^{\theta-sp}+\frac{1}{R^{sp+1}}\int_{\mathbb{R}^n}\sigma_{k-s-1}|Du|^{p-1}(-u)^{-\delta-s+sp}\eta^{\theta-sp-1}\\
						\leq &\varepsilon \int_{\mathbb{R}^n}\sigma_{k-s-1}|Du|^{p(s+1)}(-u)^{-\delta-s-1}\eta^{\theta}+\frac{C_\varepsilon}{R^{(s+1)p}}\int_{\mathbb{R}^n}\sigma_{k-s-1}(-u)^{-\delta+(p-1)(s+1)}\eta^{\theta-(s+1)p},
					\end{aligned}
				\end{equation}
				where we apply Young's inequality with exponent pairs $(s+1,\frac{s+1}{s})$ and $(\frac{p}{p-1}(s+1), \frac{p(s+1)}{ps+1})$ respectively in the last inequality, and we use "$\lesssim$" , "$\cong$", etc. to drop out some positive constants independent of $R$ and $u$. Repeating the same process, we get
				\begin{equation}\label{eq::est::last-term-in-est::Es-1}
					\begin{aligned}
						&\frac{1}{R^{sp}}\int_{\mathbb{R}^n}\sigma_{k-s}(-u)^{-\delta-(1-p)s}\eta^{\theta-sp}\\
						\leq &\varepsilon \int_{\mathbb{R}^n}\sigma_{k-s-1}|Du|^{p(s+1)}(-u)^{-\delta-s-1}\eta^{\theta}+\frac{C_\varepsilon}{R^{(s+1)p}}\int_{\mathbb{R}^n}\sigma_{k-s-1}(-u)^{-\delta+(p-1)(s+1)}\eta^{\theta-(s+1)p}\\
						\leq &\varepsilon \int_{\mathbb{R}^n}\sigma_{k-s-1}|Du|^{p(s+1)}(-u)^{-\delta-s-1}\eta^{\theta}+\varepsilon \int_{\mathbb{R}^n}\sigma_{k-s-2}|Du|^{p(s+2)}(-u)^{-\delta-s-2}\eta^{\theta}\\&+\frac{C_\varepsilon}{R^{(s+2)p}}\int_{\mathbb{R}^n}\sigma_{k-s-2}(-u)^{-\delta+(p-1)(s+2)}\eta^{\theta-(s+2)p}\\
						\leq& \varepsilon\sum_{i=s+1}^kB_i+\frac{C_\varepsilon}{R^{pk}}\int_{\mathbb{R}^n}(-u)^{-\delta+(p-1)k}\eta^{\theta-kp}.
					\end{aligned}
				\end{equation}
				Putting  \eqref{eq::expansion}, \eqref{eq::est::Es-1} and \eqref{eq::est::last-term-in-est::Es-1} together, we obtain
				\begin{equation}\label{eq::mainineq-2}
					k\int_{\mathbb{R}^n}\sigma_k(-u)^{-\delta}\eta^\theta+\sum_{s=1}^k(b_s-\varepsilon)B_s\lesssim \frac{1}{R^{kp}}\int_{\mathbb{R}^n}(-u)^{-\delta+(p-1)k}\eta^{\theta-kp}.
				\end{equation}
				
				Now for $\alpha\in (-\infty,k_{p,*}]$ we divide the proof into four cases.
				\begin{itemize}
					\item[(a)] $\alpha=(p-1)k$,
					\item[(b)] $\alpha\in (-\infty,(p-1)k)$,
					\item[(c)] $\alpha\in((p-1)k,k_{p,*})$,
					\item[(d)] $\alpha=k_{p,*}$.
				\end{itemize}
				
				In all cases of (a)-(c), $b_s>0$ for all $s=1,\cdots,k$. For \textbf{cases (a)}, let $\delta=\alpha$, $\theta>n$. It follows from \eqref{eq::mainineq-1} and \eqref{eq::mainineq-2} that
				\begin{equation}
					\int_{\mathbb{R}^n}\eta^\theta\leq \int_{\mathbb{R}^n}\sigma_k(-u)^{-\delta}\eta^{\theta}\lesssim \frac{1}{R^{pk}}\int_{\mathbb{R}^n}\eta^{\theta-kp}\lesssim \varepsilon\int_{\mathbb{R}^n}\eta^\theta+R^{n-\theta},
				\end{equation}
				where we use Young's inequality with exponent pair $(\frac{\theta}{kp},\frac\theta{\theta-kp})$ in the last inequality. Now we choose $\varepsilon$ small enough and let $R\rightarrow \infty$, we get a contradiction.
				
				For \textbf{cases (b) and (c)}, let $\delta >\frac{n-kp}{kp}(k_{p,*}-\alpha)$ and $0<\delta<\frac{n-kp}{kp}(k_{p,*}-\alpha)$ respectively, then we have $\frac{\alpha-\delta}{(p-1)k-\delta}>1$. By Young's inequality with $(\frac{\alpha-\delta}{(p-1)k-\delta}, \frac{\alpha-\delta}{\alpha-(p-1)k})$, we obtain
				\begin{equation}\label{eq::est::last-term-in-mainineq-2}
					\begin{aligned}
						\frac1{R^{pk}}\int_{\mathbb{R}^n}(-u)^{-\delta+(p-1)k}\eta^{\theta-kp}\leq \varepsilon\int_{\mathbb{R}^n}(-u)^{\alpha-\delta}\eta^\theta+\frac{C_\varepsilon}{R^\frac{kp(\alpha-\delta)}{\alpha-(p-1)k}}\int_{\mathbb{R}^n}\eta^{\theta-\frac{kp(\alpha-\delta)}{\alpha-(p-1)k}}.
					\end{aligned}
				\end{equation}
				Put \eqref{eq::mainineq-1}, \eqref{eq::est::last-term-in-mainineq-2} into \eqref{eq::mainineq-2}, and let $R\rightarrow \infty$, we get a contradiction after choosing $\theta>\frac{kp(\alpha-\delta)}{\alpha-(p-1)k}$.
				
				For \textbf{cases (d)}, first choose  $\delta =0$. It follows from \eqref{eq::expansion::1} that
				\begin{equation}\label{eq::id::E1}
					k\int_{\mathbb{R}^n} \sigma_k\eta^\theta=-\theta E_1.
				\end{equation}
				Next fix $\delta \in (0,\min\{p-1,\frac{k^2p(p-1)}{(n-pk)(kp-1)}\})$, and let $\beta=\frac{n\delta}{\alpha}$. By \eqref{eq::domination}, \eqref{eq::prop::eta}, \eqref{eq::expression::Es} and Young's inequality with exponent $(\frac{p}{p-1},p)$, we have
				\begin{equation}\label{eq::est::E1-1}
					\begin{aligned}
						R^\beta |E_1|\leq &R^{\beta-1}\int_{\mathbb{R}^n}\sigma_{k-1}|Du|^{p-1}\eta^{\theta-1}\\
						\leq &\int_{\mathbb{R}^n}\sigma_{k-1}|Du|^p(-u)^{-\delta-1}\eta^\theta+R^{(\beta-1)p}\int_{\mathbb{R}^n}\sigma_{k-1}(-u)^{(p-1)(\delta+1)}\eta^{\theta-p}\\
						=&B_1+V_1,
					\end{aligned}
				\end{equation}
				where $V_1:=R^{(\beta-1)p}\int_{\mathbb{R}^n}\sigma_{k-1}(-u)^{(p-1)(\delta+1)}\eta^{\theta-p}$. Let
				\begin{equation}\label{eq::expression::Vs}
					\begin{aligned}
						&V_s:=R^{sp(\beta-1)}\int_{\mathbb{R}^n}\sigma_{k-s}(-u)^{-\delta-s+sp(\delta+1)}\eta^{\theta-ps},
					\end{aligned}
				\end{equation} and
				\begin{equation}\label{eq::expression::Ws}
					\begin{aligned}
						&W_s:=R^{\beta-sp}\int_{\mathbb{R}^n}\sigma_{k-s}(-u)^{(p-1)s}\eta^{\theta-sp}.
					\end{aligned}
				\end{equation}
				We can prove the following two inequalities:
				\begin{equation}\label{eq::est::Vs}
					V_s\lesssim B_{s+1}+V_{s+1}+W_{s+1},
				\end{equation}
				and
				\begin{equation}\label{eq::est::Ws}
					W_s\lesssim B_{s+1}+V_{s+1}+W_{s+1}.
				\end{equation}
				
				By \eqref{formula::sigmak::1}, divergence theorem, \eqref{eq::prop::eta} and Proposition \ref{prop::domination}, we obtain
				\begin{equation}\label{eq::est::Vs-1}
					\begin{aligned}
						V_s=&R^{sp(\beta-1)}\int_{\mathbb{R}^n}\sigma_{k-s}(-u)^{-\delta-s+sp(\delta+1)}\eta^{\theta-ps}\\
						\cong &R^{sp(\beta-1)}\int_{\mathbb{R}^n}\sigma_{k-s}^{ij}(|Du|^{p-2}u_i)_j(-u)^{-\delta-s+sp(\delta+1)}\eta^{\theta-ps}\\
						\cong &R^{sp(\beta-1)}\int_{\mathbb{R}^n}\sigma_{k-s}^{ij}|Du|^{p-2}u_iu_j(-u)^{-\delta-s-1+sp(\delta+1)}\eta^{\theta-ps}\\
						&+R^{sp(\beta-1)}\int_{\mathbb{R}^n}\sigma_{k-s}^{ij}|Du|^{p-2}u_i\eta_j(-u)^{-\delta-s+sp(\delta+1)}\eta^{\theta-ps-1}\\
						\lesssim &R^{sp(\beta-1)}\int_{\mathbb{R}^n}\sigma_{k-s-1}|Du|^{p}(-u)^{-\delta-s-1+sp(\delta+1)}\eta^{\theta-ps}\\
						&+R^{sp(\beta-1)-1}\int_{\mathbb{R}^n}\sigma_{k-s-1}|Du|^{p-1}(-u)^{-\delta-s+sp(\delta+1)}\eta^{\theta-ps-1}.
					\end{aligned}
				\end{equation}
				Applying Young's inequality with exponent pair $(s+1, \frac{s+1}s)$ to the first term of last line in \eqref{eq::est::Vs-1}, we derive that
				\begin{equation}\label{eq::est::first-term-in-est::Vs}
					\begin{aligned}
						&R^{sp(\beta-1)}\int_{\mathbb{R}^n}\sigma_{k-s-1}|Du|^{p}(-u)^{-\delta-s-1+sp(\delta+1)}\eta^{\theta-ps}\\
						\lesssim& \int_{\mathbb{R}^n}\sigma_{k-s-1}(-u)^{-\delta-s-1}|Du|^{(s+1)p}\eta^\theta+R^{(s+1)p(\beta-1)}\int_{\mathbb{R}^n}\sigma_{k-s-1}(-u)^{-\delta-s-1+(s+1)p(\delta+1)}\eta^{\theta-(s+1)p}\\
						=&B_{s+1}+V_{s+1}.
					\end{aligned}
				\end{equation}
				Similarly, applying Young's inequality with exponent pair $(\frac{(s+1)p}{p-1}, \frac{(s+1)p}{sp+1})$ to the last term of last line in \eqref{eq::est::Vs-1}, we derive that
				\begin{equation}\label{eq::est::last-term-in-est::Vs}
					\begin{aligned}
						&R^{sp(\beta-1)-1}\int_{\mathbb{R}^n}\sigma_{k-s-1}|Du|^{p-1}(-u)^{-\delta-s+sp(\delta+1)}\eta^{\theta-ps-1}\\
						\lesssim&\int_{\mathbb{R}^n}\sigma_{k-s-1}(-u)^{-\delta-s-1}|Du|^{(s+1)p}\eta^\theta+R^{\frac{\beta s(s+1)p^2}{sp+1}-(s+1)p}\int_{\mathbb{R}^n}\sigma_{k-s-1}(-u)^{(p-1)(s+1)+(\frac{s(s+1)p^2}{sp+1}-1)\delta}\eta^{\theta-(s+1)p}\\
						\lesssim& \int_{\mathbb{R}^n}\sigma_{k-s-1}(-u)^{-\delta-s-1}|Du|^{(s+1)p}\eta^\theta+R^{(s+1)p(\beta-1)}\int_{\mathbb{R}^n}\sigma_{k-s-1}(-u)^{-\delta-s-1+(s+1)p(\delta+1)}\eta^{\theta-(s+1)p}\\&+R^{\beta-(s+1)p}\int_{\mathbb{R}^n}\sigma_{k-s-1}(-u)^{(s+1)(p-1)}\eta^{\theta-(s+1)p}\\=&B_{s+1}+V_{s+1}+W_{s+1},
					\end{aligned}
				\end{equation}
				where the last inequality follows from using Young's inequality with exponent  $(\frac{s(s+1)p^2+p-1}{s(s+1)p^2-(sp+1)},\\\frac{s(s+1)p^2+p-1}{(s+1)p})$.
				Substituting \eqref{eq::est::first-term-in-est::Vs} and \eqref{eq::est::last-term-in-est::Vs} into \eqref{eq::est::Vs-1}, we arrive at \eqref{eq::est::Vs}.
				
				By \eqref{formula::sigmak::1}, divergence theorem, \eqref{eq::prop::eta} and Proposition \ref{prop::domination}, we obtain
				\begin{equation}\label{eq::est::Ws-1}
					\begin{aligned}
						W_s=&R^{\beta-sp}\int_{\mathbb{R}^n}\sigma_{k-s}(-u)^{(p-1)s}\eta^{\theta-sp}\\
						\cong &R^{\beta-sp}\int_{\mathbb{R}^n}\sigma_{k-s}^{ij}(|Du|^{p-2}u_i)_j(-u)^{(p-1)s}\eta^{\theta-sp}\\
						\cong &R^{\beta-sp}\int_{\mathbb{R}^n}\sigma_{k-s}^{ij}|Du|^{p-2}u_iu_j(-u)^{(p-1)s-1}\eta^{\theta-sp}+R^{\beta-sp}\int_{\mathbb{R}^n}\sigma_{k-s}^{ij}|Du|^{p-2}u_i\eta_j(-u)^{(p-1)s}\eta^{\theta-sp-1}\\
						\lesssim& R^{\beta-sp}\int_{\mathbb{R}^n}\sigma_{k-s-1}|Du|^{p}(-u)^{(p-1)s-1}\eta^{\theta-sp}+R^{\beta-sp-1}\int_{\mathbb{R}^n}\sigma_{k-s-1}|Du|^{p-1}(-u)^{(p-1)s}\eta^{\theta-sp-1}.
					\end{aligned}
				\end{equation}
				Applying Young's inequality with exponent pair $(s+1, \frac{s+1}s)$ to the first term of last line in \eqref{eq::est::Ws-1}, we derive that
				\begin{equation}\label{eq::est::first-term-in-est::Ws}
					\begin{aligned}
						&R^{\beta-sp}\int_{\mathbb{R}^n}\sigma_{k-s-1}|Du|^{p}(-u)^{(p-1)s-1}\eta^{\theta-sp}\\
						\lesssim& \int_{\mathbb{R}^n}\sigma_{k-s-1}|Du|^{(s+1)p}(-u)^{-\delta-s-1}\eta^\theta+R^{\frac{(s+1)\beta}s-(s+1)p}\int_{\mathbb{R}^n}\sigma_{k-s-1}(-u)^{\frac{\delta}s+(s+1)(p-1)}\eta^{\theta-(s+1)p}\\
						\lesssim& \int_{\mathbb{R}^n}\sigma_{k-s-1}|Du|^{(s+1)p}(-u)^{-\delta-s-1}\eta^\theta+R^{\beta-(s+1)p}\int_{\mathbb{R}^n}\sigma_{k-s-1}(-u)^{(s+1)(p-1)}\eta^{\theta-(s+1)p}\\&+R^{(s+1)p(\beta-1)}\int_{\mathbb{R}^n}\sigma_{k-s-1}(-u)^{-\delta-s-1+(s+1)p(\delta+1)}\eta^{\theta-(s+1)p}\\
						=&B_{s+1}+W_{s+1}+V_{s+1},
					\end{aligned}
				\end{equation}
				where the last inequality follows from using Young's inequality with exponent pair $(s(s+1)p-s,\frac {s(s+1)p-s}{(sp-1)(s+1)})$ for $R^{\frac{\beta}{s}}(-u)^{\frac{\delta}{s}}\cdot1$.  Then applying Young's inequality with exponent pair $(\frac{(s+1)p}{p-1},\frac{(s+1)p}{sp+1})$ to $|Du|^{p-1}\cdot R^{\beta-(sp+1)}(-u)^{sp+1+\delta}\eta^{-(sp+1)}$, we obtain
				\begin{equation}\label{eq::est::last-term-in-est::Ws}
					\begin{aligned}
						&R^{\beta-sp-1}\int_{\mathbb{R}^n}\sigma_{k-s-1}|Du|^{p-1}(-u)^{(p-1)s}\eta^{\theta-sp-1}\\
						\lesssim&\int_{\mathbb{R}^n}\sigma_{k-s-1}|Du|^{(s+1)p}(-u)^{-\delta-s-1}\eta^\theta\\&+R^{\frac{(s+1)p\beta}{sp+1}-(s+1)p}\int_{\mathbb{R}^n}\sigma_{k-s-1}(-u)^{(\frac{(s+1)p}{sp+1}-1)\delta+(s+1)(p-1)}\eta^{\theta-(s+1)p}\\
						\lesssim& \int_{\mathbb{R}^n}\sigma_{k-s-1}|Du|^{(s+1)p}(-u)^{-\delta-s-1}\eta^\theta+R^{\beta-(s+1)p}\int_{\mathbb{R}^n}\sigma_{k-s-1}(-u)^{(s+1)(p-1)}\eta^{\theta-(s+1)p}\\&+R^{(s+1)p(\beta-1)}\int_{\mathbb{R}^n}\sigma_{k-s-1}(-u)^{-\delta-s-1+(s+1)p(\delta+1)}\eta^{\theta-(s+1)p}\\
						=&B_{s+1}+W_{s+1}+V_{s+1},
					\end{aligned}
				\end{equation}
				where the last inequality follows from using Young's inequality with exponent pair $(\frac {(sp+p-1)(sp+1)}{sp^2(s+1)},\\\frac {(sp+p-1)(sp+1)}{p-1})$.
				Substituting \eqref{eq::est::first-term-in-est::Ws} and \eqref{eq::est::last-term-in-est::Ws} into \eqref{eq::est::Ws-1}, we arrive at \eqref{eq::est::Ws}.
				
				Using \eqref{eq::est::Vs} and \eqref{eq::est::Ws}, we obtain
				\begin{equation}\label{eq::est::Vs-2}
					V_s\lesssim \sum_{i=s+1}^kB_i+V_k+W_k.
				\end{equation}
				Putting \eqref{eq::est::Vs-2} into \eqref{eq::est::E1-1}, we arrive at
				\begin{equation}\label{eq::est::E1-2}
					R^\beta|E_1|\lesssim \sum_{i=1}^kB_i+W_k+V_k.
				\end{equation}
				Note that for fixed $\delta \in (0,\min\{p-1,\frac{k^2p(p-1)}{(n-pk)(kp-1)}\})$, $b_s>0$ and  \eqref{eq::mainineq-2} still holds. So we get
				\begin{equation}\label{eq::est::sumbi}
					\sum_{i=1}^kB_i\lesssim \frac1{R^{kp}}\int_{\mathbb{R}^n}(-u)^{-\delta+(p-1)k}\eta^{\theta-kp}.
				\end{equation}
				Substituting \eqref{eq::expression::Vs}, \eqref{eq::expression::Ws} and \eqref{eq::est::sumbi} into \eqref{eq::est::E1-2}, we have
				\begin{equation}\label{eq::est::E1-3}
					\begin{aligned}
						|E_1|\lesssim& R^{-\beta-kp}\int_{\mathbb{R}^n}(-u)^{-\delta+(p-1)k}\eta^{\theta-kp}+R^{-kp}\int_{\mathbb{R}^n}(-u)^{k(p-1)}\eta^{\theta-kp}\\&+R^{(kp-1)\beta-kp}\int_{\mathbb{R}^n}(-u)^{-\delta-k+kp(\delta+1)}\eta^{\theta-kp}.
					\end{aligned}
				\end{equation}
				Since $\alpha=k_{p,*}=\frac{n(p-1)k}{n-kp}>(p-1)k$, we have $a_1:=\frac{\alpha}{(p-1)k}>1$. Let $b_1$ satisfiy $\frac1{a_1}+\frac1{b_1}=1$, $\theta>b_1kp=\frac{(\alpha-(p-1)k)pk}{\alpha}$, and $\lambda_1=\frac{\theta}{b_1}-kp$. After using H\"older inequality, we derive that
				\begin{equation}\label{eq::est::first-term-in-est::E1-3}
					\begin{aligned}
						&R^{-kp}\int_{\mathbb{R}^n}(-u)^{(p-1)k}\eta^{\theta-pk}\\\leq& R^{-kp}\bigg(\int_{\mathbb{R}^n}\big((-u)^{(p-1)k}\eta^{\theta-kp-\lambda_1}\big)^{a_1}\bigg)^\frac1{a_1}\bigg(\int_{\mathbb{R}^n}\eta^{\lambda_1 {b_1}}\bigg)^\frac1{b_1}
						\\\lesssim&\bigg(\int_{\mathbb{R}^n}(-u)^\alpha\eta^\theta\bigg)^\frac{(p-1)k}\alpha.
					\end{aligned}
				\end{equation}
				Similarly, we have $a_2:=\frac\alpha{(p-1)k-\delta}>1$. Let $b_2$ satisfy $\frac1{a_2}+\frac1{b_2}=1$, $\theta>b_2kp=\frac{kp\alpha}{\alpha+\delta-(p-1)k}$, $\lambda_2=\frac{\theta}{b_2}-kp$. Using H\"older inequality again, we have
				\begin{equation}\label{eq::est::second-term-in-est::E1-3}
					\begin{aligned}
						&R^{-\beta-kp}\int_{\mathbb{R}^n}(-u)^{-\delta+(p-1)k}\eta^{\theta-kp}\\\leq& R^{-\beta-kp}\bigg(\int_{\mathbb{R}^n}\big((-u)^{-\delta+(p-1)k}\eta^{\theta-kp-\lambda_2}\big)^{a_2}\bigg)^\frac1{a_2}\bigg(\int_{\mathbb{R}^n}\eta^{\lambda_2b_2}\bigg)^\frac1{b_2}\\\lesssim&\bigg(\int_{\mathbb{R}^n}(-u)^\alpha\eta^\theta\bigg)^\frac{(p-1)k-\delta}\alpha.
					\end{aligned}
				\end{equation}
				Since $\delta<\frac{k^2p(p-1)}{(n-pk)(kp-1)}$, we have that $a_3:=\frac{\alpha}{(kp-1)\delta+k(p-1)}$. Let $b_3$ satisfy $\frac1{a_3}+\frac1{b_3}=1$, $\theta>b_3kp=\frac{kp\alpha}{\alpha-(kp-1)\delta-k(p-1)}$, $\lambda_3=\frac{\theta}{b_3}$. By  H\"older inequality, we get
				\begin{equation}\label{eq::est::last-term-in-est::E1-3}
					\begin{aligned}
						&R^{(kp-1)\beta-kp}\int_{\mathbb{R}^n}(-u)^{-\delta-k+kp(\delta+1)}\eta^{\theta-kp}\\\leq& R^{(kp-1)\beta-kp}\bigg(\int_{\mathbb{R}^n}\big((-u)^{-\delta-k+kp(\delta+1)}\eta^{\theta-kp-\lambda_3}\big)^{a_3}\bigg)^\frac1{a_3}\bigg(\int_{\mathbb{R}^n}\eta^{\lambda_3b_3}\bigg)^\frac1{b_3}\\\lesssim&\bigg(\int_{\mathbb{R}^n}(-u)^\alpha\eta^\theta\bigg)^\frac{(kp-1)\delta+k(p-1)}\alpha.
					\end{aligned}
				\end{equation}
				Substituting \eqref{eq::est::first-term-in-est::E1-3}, \eqref{eq::est::second-term-in-est::E1-3}, \eqref{eq::est::last-term-in-est::E1-3} into \eqref{eq::est::E1-3}, we obtain
				\begin{equation}\label{eq::est::E1-4}
					|E_1|\lesssim \bigg(\int_{\mathbb{R}^n}(-u)^\alpha\eta^\theta\bigg)^\frac{(p-1)k}\alpha+\bigg(\int_{\mathbb{R}^n}(-u)^\alpha\eta^\theta\bigg)^\frac{(p-1)k-\delta}\alpha+\bigg(\int_{\mathbb{R}^n}(-u)^\alpha\eta^\theta\bigg)^\frac{(kp-1)\delta+k(p-1)}\alpha.
				\end{equation}
				Recall the definition of $E_s$, all the integrations in \eqref{eq::est::E1-4} are taken overline the domain $\mathrm{Supp}(D\eta)=B_{2R}\backslash B_R$. It follows from \eqref{eq::id::E1} and \eqref{eq::est::E1-4} that
				\begin{equation}\label{eq::mainineq::3}
					k\int_{\mathbb R^n}\sigma_k\eta^\theta\lesssim \bigg(\int_{B_{2R}\backslash B_R}(-u)^\alpha\eta^\theta\bigg)^\frac{(p-1)k}\alpha+\bigg(\int_{B_{2R}\backslash B_R}(-u)^\alpha\eta^\theta\bigg)^\frac{(p-1)k-\delta}\alpha+\bigg(\int_{ B_{2R}\backslash B_R}(-u)^\alpha\eta^\theta\bigg)^\frac{(kp-1)\delta+k(p-1)}\alpha.
				\end{equation}
				Since $0<\frac{(p-1)k}\alpha,\frac{(p-1)k-\delta}\alpha,\frac{(kp-1)\delta+k(p-1)}\alpha<1$, from \eqref{eq::mainineq::3}, we have
				\begin{equation}
					\int_{\mathbb R^n}(-u)^\alpha\eta^\theta\leq constant<\infty.
				\end{equation}
				This implies
				\begin{equation}
					\int_{B_{2R}\backslash B_R}(-u)^\alpha\eta^\theta\rightarrow 0\quad\text{as}\quad R\rightarrow +\infty.
				\end{equation}
				Hence, from \eqref{eq::mainineq::3} again, as $ R \to +\infty$, we have
				\begin{equation}
					\int_{\mathbb R^n}(-u)^\alpha\eta^\theta\leq 0.
				\end{equation}
				This contradicts with $u$ being a negative solution.
			\end{proof}
			
			\noindent\textbf{Acknowledgements: } The authors would like to thank Prof. Xi-Nan Ma for his advice, constant support and
				encouragement. The first author  was supported by the National Natural Science Foundation of China under Grant 12301257. The second and third authors were  supported by the National Natural Science Foundation of China under Grant 11971137 and Postgraduate Innovation Grant of Harbin Normal University (HSDBSCX2024-13).

			\bibliographystyle{plain}			
			\bibliography{GaoShiZhang20250302}

\begin{thebibliography}{10}

\bibitem{BaoFeng2022CMB}
Jiguang Bao and Qiaoli Feng.
\newblock Necessary and sufficient conditions on global solvability for the
  {$p$}-{$k$}-{H}essian inequalities.
\newblock {\em Canad. Math. Bull.}, 65(4):1004--1019, 2022.

\bibitem{ChangGurskyYang2003NSAM}
Sun-Yung~A. Chang, Matthew~J. Gursky, and Paul~C. Yang.
\newblock Entire solutions of a fully nonlinear equation.
\newblock In {\em Lectures on partial differential equations}, volume~2 of {\em
  New Stud. Adv. Math.}, pages 43--60. Int. Press, Somerville, MA, 2003.

\bibitem{PietraGavitoneXia2021Adv}
Francesco Della~Pietra, Nunzia Gavitone, and Chao Xia.
\newblock Symmetrization with respect to mixed volumes.
\newblock {\em Adv. Math.}, 388:Paper No. 107887, 31, 2021.

\bibitem{GidasSpruck1981CPAM}
B.~Gidas and J.~Spruck.
\newblock Global and local behavior of positive solutions of nonlinear elliptic
  equations.
\newblock {\em Comm. Pure Appl. Math.}, 34(4):525--598, 1981.

\bibitem{Gonzalez2005Duke}
Mar\'{\i}a del~Mar Gonz\'{a}lez.
\newblock Singular sets of a class of locally conformally flat manifolds.
\newblock {\em Duke Math. J.}, 129(3):551--572, 2005.

\bibitem{JiBao2010PAMS}
Xiaohu Ji and Jiguang Bao.
\newblock Necessary and sufficient conditions on solvability for {H}essian
  inequalities.
\newblock {\em Proc. Amer. Math. Soc.}, 138(1):175--188, 2010.

\bibitem{Keller1957CPAM}
J.~B. Keller.
\newblock On solutions of {$\Delta u=f(u)$}.
\newblock {\em Comm. Pure Appl. Math.}, 10:503--510, 1957.

\bibitem{MitidieriPohozaev2004MJM}
Enzo Mitidieri and Stanislav~I. Pohozaev.
\newblock Towards a unified approach to nonexistence of solutions for a class
  of differential inequalities.
\newblock {\em Milan J. Math.}, 72:129--162, 2004.

\bibitem{NaitoUsami1997MathZ}
Y\={u}ki Naito and Hiroyuki Usami.
\newblock Entire solutions of the inequality {${\rm div}(A(|Du|)Du)\geq f(u)$}.
\newblock {\em Math. Z.}, 225(1):167--175, 1997.

\bibitem{Obata1962JMSJ}
Morio Obata.
\newblock Certain conditions for a {R}iemannian manifold to be isometric with a
  sphere.
\newblock {\em J. Math. Soc. Japan}, 14:333--340, 1962.

\bibitem{Osserman1957PJM}
Robert Osserman.
\newblock On the inequality {$\Delta u\geq f(u)$}.
\newblock {\em Pacific J. Math.}, 7:1641--1647, 1957.

\bibitem{Ouqianzhong2010MAA}
Qianzhong Ou.
\newblock Nonexistence results for {H}essian inequality.
\newblock {\em Methods Appl. Anal.}, 17(2):213--223, 2010.

\bibitem{Ouqianzhong2013PJM}
Qianzhong Ou.
\newblock Singularities and {L}iouville theorems for some special conformal
  {H}essian equations.
\newblock {\em Pacific J. Math.}, 266(1):117--128, 2013.

\bibitem{PhucVerbitsky2006CPDE}
Nguyen~Cong Phuc and Igor~E. Verbitsky.
\newblock Local integral estimates and removable singularities for quasilinear
  and {H}essian equations with nonlinear source terms.
\newblock {\em Comm. Partial Differential Equations}, 31(10-12):1779--1791,
  2006.

\bibitem{PhucVerbitsky2008Ann}
Nguyen~Cong Phuc and Igor~E. Verbitsky.
\newblock Quasilinear and {H}essian equations of {L}ane-{E}mden type.
\newblock {\em Ann. of Math. (2)}, 168(3):859--914, 2008.

\bibitem{SerrinZou2002Acta}
James Serrin and Henghui Zou.
\newblock Cauchy-{L}iouville and universal boundedness theorems for quasilinear
  elliptic equations and inequalities.
\newblock {\em Acta Math.}, 189(1):79--142, 2002.

\bibitem{TrudingerWang1999Ann}
Neil~S. Trudinger and Xu-Jia Wang.
\newblock Hessian measures. {II}.
\newblock {\em Ann. of Math. (2)}, 150(2):579--604, 1999.

\bibitem{Tso1990AIHP}
Kaising Tso.
\newblock Remarks on critical exponents for {H}essian operators.
\newblock {\em Ann. Inst. H. Poincar\'{e} C Anal. Non Lin\'{e}aire},
  7(2):113--122, 1990.

\bibitem{Wangxujia1994Indiana}
Xu~Jia Wang.
\newblock A class of fully nonlinear elliptic equations and related
  functionals.
\newblock {\em Indiana Univ. Math. J.}, 43(1):25--54, 1994.

\end{thebibliography}

			\end{document}